\documentclass[12pt,leqno,twoside]{amsart}
\usepackage{amsmath,amscd,amssymb,amsfonts,latexsym,mathrsfs,bm,extarrows,graphicx,stackrel}
\usepackage[all,cmtip]{xy}
\usepackage{ulem}
\usepackage{tikz}

\usepackage{amssymb}
\usepackage{latexsym}
\usepackage{graphicx}
\usepackage{epsfig}
\usepackage{srcltx}
\usepackage[colorlinks=true, pdfstartview=FitV, linkcolor=blue, citecolor=blue, urlcolor=blue]{hyperref}
\usepackage{subcaption}

\newtheorem{theorem}{Theorem}[section]
\newtheorem{corollary}[theorem]{Corollary}
\newtheorem{lemma}[theorem]{Lemma}
\newtheorem{proposition}[theorem]{Proposition}

\theoremstyle{remark}
\newtheorem{remark}[theorem]{\sc Remark}
\theoremstyle{remark}

\theoremstyle{definition}

\theoremstyle{remark}
\newtheorem{example}[theorem]{\sc Example}

\theoremstyle{remark}

\theoremstyle{remark}

\numberwithin{equation}{section}  % numbers as (1.2) instead of (1) etc
\def\be{\begin{equation}}
\def\ee{\end{equation}}
\def\bt{\begin{theorem}}
\def\et{\end{theorem}}
\newcommand{\lra}{\longrightarrow}
\def\bc{\begin{corollary}}
\def\ec{\end{corollary}}
\def\br{\begin{remark}}
\def\er{\end{remark}}
\def\bex{\begin{example}}
\def\eex{\end{example}}
\def\bp{\begin{proposition}}
\def\ep{\end{proposition}}

\usepackage{srcltx}
\usepackage[colorlinks=true, pdfstartview=FitV, linkcolor=blue, citecolor=blue, urlcolor=blue]{hyperref}
\usepackage{subcaption} 
\usepackage{amsbsy}
\usepackage{amsthm,alltt,dsfont}

\usepackage[margin=1.2in]{geometry}

\usepackage[utf8]{inputenc}
\usepackage{amsthm,soul}
\usepackage{enumitem}
\usepackage{setspace,kantlipsum}
\usepackage{tikz-cd}
\usepackage{textcomp}

\usepackage{colonequals}
\usepackage{mathtools,comment}

\usepackage{graphicx,bm}
\usepackage{mathptmx}

\newcommand{\C}{\mathbb{C}}

\newcommand{\Q}{\mathbb{Q}}

\newcommand{\cD}{\mathcal{D}}

\newcommand{\cF}{\mathcal{F}}
\newcommand{\cG}{\mathcal{G}}
\newcommand{\cH}{\mathcal{H}}

\newcommand{\cL}{\mathcal{L}}

\newcommand{\cP}{\mathcal{P}}

\newcommand{\cS}{\mathcal{S}}

\newcommand{\cW}{\mathcal{W}}

\newcommand{\bC}{\mathbb{C}}

\newcommand{\bH}{\mathbb{H}}

\newcommand{\bQ}{\mathbb{Q}}

\newcommand{\bZ}{\mathbb{Z}}

\newcommand{\of}{\overline{f}}
\newcommand{\oX}{\overline{X}}
\newcommand{\oF}{\overline{F}}
\newcommand{\ok}{\overline{k}}
\newcommand{\oi}{\overline{i}}
\newcommand{\oY}{\overline{Y}}
\newcommand{\oE}{\overline{E}}
\newcommand{\og}{\overline{g}}
\newcommand{\oG}{\overline{G}}

\newcommand{\wti}{\widetilde}

\newcommand{\Sing}{\mathrm{Sing}}

\begin{document}

\title[Fibers of polynomial maps]
{On the topology of fibers of complex polynomial maps}

\author{Lauren\c{t}iu Maxim}
\address{L. Maxim : Department of Mathematics, University of Wisconsin-Madison, 480 Lincoln Drive, Madison WI 53706-1388, USA, \newline {\text and} \newline Institute of Mathematics of the Romanian Academy, P.O. Box 1-764, 70700 Bucharest, ROMANIA}
\email{maxim@math.wisc.edu}

\author{John Messina}
\address{J. Messina : Department of Mathematics, University of Wisconsin-Madison, 480 Lincoln Drive, Madison WI 53706-1388, USA}
\email{jamessina@wisc.edu}

\dedicatory{Dedicated to the memory of our friend and colleague Mihai Tib\u{a}r.}

\begin{abstract}
%The aim of this paper is twofold. On the one hand, we survey known results on the cohomology of fibers of complex polynomial maps, with particular emphasis on vanishing ranges. On the other hand, we establish new results on the vanishing cohomology of a polynomial at a bifurcation value and derive upper bounds for the first possibly nonvanishing Betti number of a generic, respectively atypical, fiber in terms of local singularity invariants. Our results generalize several theorems of Tibăr, Siersma, Dimca, and others, previously proved for isolated singularities, including at infinity.

We survey known results on the cohomology of fibers of complex polynomial maps, with particular emphasis on vanishing ranges, and establish new results on the vanishing cohomology of a polynomial at a bifurcation value. We also derive sharp upper bounds for the first possibly nonvanishing Betti number of general and atypical fibers in terms of local singularity invariants. These results extend several theorems of Tib\u{a}r, Siersma, Dimca, and others from the case of isolated singularities, including singularities at infinity, to polynomial maps with arbitrary singularities.
\end{abstract}

\date{\today}

\subjclass[2020]{32S20, 32S30, 32S50, 32S55, 32S60, 58K15, 58K30}

\keywords{polynomial maps, general fiber, atypical fiber, bifurcation set, Betti numbers, vanishing cycles, vanishing cohomology}

\maketitle

\tableofcontents

\section{Introduction}
\subsection{Setup} Let $f\colon \mathbb{C}^{n+1}\rightarrow \mathbb{C}$ be a complex polynomial map. Then there is a finite set $B \subset \mathbb{C}$ such that 
\[f|_{\mathbb{C}^{n+1}\setminus f^{-1}(B)}: \mathbb{C}^{n+1}\setminus f^{-1}(B)\rightarrow \mathbb{C}\setminus B\]
is a topologically locally trivial fibration. The set $B$ consists of the collection of ``bifurcation values'' of $f$. It contains the critical values of $f$ and some other points which can be interpreted as the images of the ``critical points of $f$ at infinity''. 

An important problem in singularity theory is to understand the topology of fibers of the polynomial $f$, including both general and atypical fibers (the latter corresponding to bifurcation values). This is a problem with a long history, e.g., see \cite{Bro, NN, Hamm, Di, Sab, NZ, DS, ST, Pa, ALM, Di0, Bre, T0, T1, T2, Ta}, etc. 

For $c\notin B,$ the above fibration induced by $f$  ensures that the topology of the fiber \[F_c=f^{-1}(c)\] is independent of $c$. We will then occasionally use $F$ to refer to a general fiber $F_c$ for $c\notin B$. Note that the fibers of $f$ are $n$-dimensional complex affine varieties, so their cohomology vanishes in degrees $k>n$. 

As shown in \cite{Bro, NN, ST} (see also the treatments in \cite{Di, T2}), the topology of the general fiber $F$ of $f$ is determined by the vanishing topology at the bifurcation values. More precisely, if $D_b \subset \bC$ is a sufficiently small disc around $b \in B$ and \[T(F_b):=f^{-1}(D_b)\] is the corresponding ``tube'' around the atypical fiber $F_b=f^{-1}(b)$, the $k$-th ``vanishing cohomology'' at $b$ (or ``vanishing cocycles'') is defined as
\[ V^k(b):=H^{k+1}(T(F_b), F;\bQ),\]
for $F$ a general fiber of $f$ contained in $T(F_b)$. (A similar notion was considered and studied in \cite{MPT1} in the projective context.) Then an easy deformation retraction argument yields the following isomorphism for any $k\geq 0$:
\begin{equation}\label{iso-van}
\widetilde{H}^k(F;\bQ)\cong \bigoplus_{b\in B} V^k(b).
\end{equation}
Moreover, the cohomology long exact sequence for the triple $(\bC^{n+1}, T(F_b), F)$ yields short exact sequences
\be\label{ses}
0\lra \widetilde{H}^k(T(F_b);\bQ) \lra \widetilde{H}^k(F;\bQ) \lra V^k(b) \lra 0,
\ee
for each $b \in B$.

In order to study the cohomology of $F$ and $F_b$ ($b \in B$), it is convenient to consider a proper extension $\overline{f}$ of $f$. For convenience and concreteness, we will work with the closure of the graph of $f$ (but see also \cite{Bre, Di} for more general situations). In more detail, if $f\colon \mathbb{C}^{n+1}\rightarrow \mathbb{C}$ is a degree $d$ polynomial, with homogenization 
$f^h(x_0,x)$ by the new variable $x_0$, the closure in $\bC P^{n+1} \times \bC$ of the graph $X\cong \C^{n+1}$ of $f$ is the hypersurface
\[
\overline{X}:=\{\left([x_0:x],t\right) \in  \bC P^{n+1} \times \bC \mid f^h(x_0,x)-tx_0^d=0  \}.
\]
It comes endowed with an open (affine) inclusion 
$j: X\cong \mathbb{C}^{n+1}\hookrightarrow \overline{X}$ given by 
 \[(x_1,\ldots, x_{n+1})\mapsto ([1: x_1: \ldots : x_{n+1}], f(x_1,\ldots x_{n+1})),\]
and the projection $\overline{f}: \overline{X}\rightarrow \mathbb{C}$ onto the second coordinate. Then $\overline{f}$ is a proper extension of $f$, whose fibers are isomorphic to the projective closures of the fibers of $f$. Let $H_\infty=\{x_0=0\}$ denote the hyperplane at infinity in $\bC P^{n+1}$, and 
\[ \overline{X}_\infty:=\overline{X} \cap (H_\infty \times \bC)\]
be the part ``at infinity'' of $\overline{X}$. Then the singularities of $f$ can be identified with the singularities of $\overline{f}$ on $\overline{X} \setminus \overline{X}_\infty$. Let us also note that $B$ can be chosen as the minimal subset of $\bC$ such that $\overline{f}$ is a stratified submersion over $\bC \setminus B$.

Throughout this paper we will assume that $\dim \Sing(f)<n$, which in turn guarantees that the general fiber of $f$ is connected. (Such polynomials are called ``primitive'', e.g., see \cite[Prop.6.3.5]{Di}.)

\subsection{Statement of results}
We now give a brief account of the results discussed in this paper. For a gentle introduction to background material on constructible and perverse sheaves, as well as on nearby and vanishing cycles, see e.g., \cite{Di, Max, MS}.

\medskip

In Section \ref{vc}, we undertake the study of vanishing cohomology at a bifurcation value $b \in B$. The main tool in this analysis is the vanishing cycle complex $\varphi_{\overline{f}-b}Rj_*\bQ_X$ associated to the function $\of -b$, with  
$j\colon X\cong \mathbb{C}^{n+1}\hookrightarrow \overline{X}$ the open affine inclusion introduced above. Indeed, as shown, e.g., in \cite{Bre}, one gets a vector space isomorphism (see Proposition \ref{p21})
\be\label{eqi1} V^k(b)\cong \bH^k(\overline{F}_b; \varphi_{\overline{f}-b}Rj_*\bQ_X) \ee
for any non-negative integer $k$. Here \[\oF_b=\{\of=b\}\] is the fiber of $\of$ over $b$, and also the projective closure of $F_b$.

Let $\Sigma_b \subseteq \oF_b$ denote the support of $\varphi_{\overline{f}-b}Rj_*\bQ_X$. Using standard properties of perverse sheaves,  it follows immediately from \eqref{eqi1} that
\be\label{eqi2} V^k(b) \cong 0, \ {\rm for \ all} \ k \notin [n-s_b, n],   \ee
where $s_b=\dim \Sigma_b$.

Our Theorem \ref{t14a} investigates the vanishing cohomology groups $V^k(b)$ in the (possibly) non-vanishing range $k \in [n-s_b, n]$. Since the case $s_b=0$ is pretty well understood (e.g., see Corollary \ref{iso} and the references therein), in what follows we shall assume that $s_b>0$. While the precise statement of Theorem \ref{t14a} is likely too technical for inclusion in this introduction, one of its consequences is that, if $s_b \geq 2$ and $k<n-1$, the group $V^k(b)$ can be computed from the restriction of the above-mentioned vanishing cycle complex to the singular strata of $f$ contained in $\Sigma_b$ and  of dimension at least $n-k-1$ (see Corollary \ref{c2.2}). This result can be viewed as a global counterpart to the Milnor fiber cohomology considerations from  \cite{MPT2}, and it reduces the computation of vanishing cohomology to a hypercohomology spectral sequence. Although this computation is generally tedious, it becomes more explicit in the case of the lowest (possibly) nontrivial vanishing cohomology group $V^{n-s_b}(b)$; see  Theorem \ref{c14a}. More precisely, if $s_b \geq 1$, our Theorem \ref{c14a} yields an upper bound on the dimension of $V^{n-s_b}(b)$, namely we show that
\be\label{upbi}
\dim V^{n-s_b}(b) \leq  \sum_i  \mu^\pitchfork_{s_b,i},
 \ee
 where the summation is over the collection $\{\Sigma_b^{s_b,i}\}_i$ of $s_b$-dimensional connected strata of $\Sigma_b$ (including at infinity), and 
\begin{center} $\mu^\pitchfork_{s_b,i}:= \dim \widetilde{H}^{n-s_b}(MF^\pitchfork_{s_b,i};\bQ)$\end{center} is the {\it transversal Milnor number} of $\of-b$ along $\Sigma_b^{s_b,i}$,
with $MF^\pitchfork_{s_b,i}$ denoting the transversal Milnor fiber of $\of-b$ at a point in the stratum $\Sigma_b^{s_b,i}$.
%injective homomorphism \be\label{upbi} V^{n-s_b}(b) \hookrightarrow \bigoplus_i \widetilde{H}^{n-s_b}(MF^\pitchfork_{s_b,i};\bQ)^{A_b^i}, \eewhere the summation is over the collection $\{\Sigma_b^{s_b,i}\}_i$ of $s_b$-dimensional connected strata of $\Sigma_b$,  $MF^\pitchfork_{s_b,i}$ is the transversal Milnor fiber of $\of -b$ to the $s_b$-dimensional stratum $\Sigma_b^{s_b,i}$, and $A_b^i$ denotes the action of $\pi_1(\Sigma_b^{s_b,i})$  on $\widetilde{H}^{n-s_b}(MF^\pitchfork_{s_b,i};\bQ)$, with invariant subspaces appearing on the right hand side of \eqref{upbi}. 
If $s_b \geq 1$, upper bounds similar to \eqref{upbi} can be deduced from Theorem \ref{t14a} for all vanishing cohomology groups $V^k(b)$ in the range $n-s_b \leq k \leq n-1$, though not as explicit as in the case $k=n-s_b$.

It is important to note that the computation of the cohomology stalks of the vanishing cycle complex $\varphi_{\of-b}Rj_*\bQ_X$ at points in $\Sigma_b^\infty:=\Sigma_b \cap \oX_\infty$ in terms of the reduced Milnor fiber cohomology of $\of -b$ at such points (cf. \eqref{mfi} in Lemma \ref{identif}) is the main reason for working with the compactification $\oX$ of $X$ given by the graph closure of $f$.

 \medskip
 
 In Section \ref{genf}, we assemble the information on vanishing cohomology obtained in Section \ref{vc} to study the cohomology of the general fiber $F$ of $f$. With the above notation, set \[ s:=\max_{b\in B} s_b.\] We assume $s \geq 0$, and say that ``$f$ has $s$-dimensional singularities, including at infinity''.  We also define
\[B_s:=\{b \in B \mid s_b=s\}.\]
In view of the isomorphism \eqref{iso-van}, the vanishing in \eqref{eqi2} yields a well-known vanishing result (cf. \cite{NN,ST,Bre,Di}) for the cohomology of the general fiber $F$ of $f$, together with a description of its lowest (possibly) non-vanishing cohomology group. More precisely, one has (cf. Corollary \ref{c26})
\be\label{eqi3} \widetilde{H}^k(F;\bQ)\cong 0,  \ \ {\rm for} \ k \notin [n-s,n], \ee together with
\be\label{nmsgeni}
\widetilde{H}^{n-s}(F;\bQ)\cong \bigoplus_{b \in B_s} V^{n-s}(b).
\ee
In particular, by combining \eqref{nmsgeni} with \eqref{upbi}, we get an upper bound on the lowest (possibly) non-vanishing Betti number $b_{n-s}(F)=\dim \wti{H}^{n-s}(F;\bQ)$ of the general fiber of $f$. 
As the case $s=0$ was already considered in, e.g., \cite{ST,Hamm,Di}, here we assume that there exists a bifurcation value $b \in B$ with $s_b>0$, hence also $s>0$. We then get in Corollary \ref{nmsb} that
 \be\label{nmsgbi}
b_{n-s}(F) \leq  \sum_{b \in B_s} \sum_i  \mu^\pitchfork_{b,s,i},
 \ee
with $\mu^\pitchfork_{b,s,i}$ denoting as before the transversal Milnor  number of $\of-b$ at an $s$-dimensional stratum in the atypical fiber $\oF_b$, for some $b \in B_s$, and $i$ indexing the connected $s$-dimensional strata in $\Sigma_b$.

%the summation is over (the connected components of) all such strata.

%an injective homomorphism  \be\label{nmsgen2i} \widetilde{H}^{n-s}(F;\bQ)\hookrightarrow \bigoplus_{b \in B_s}  \bigoplus_i \widetilde{H}^{n-s}(MF^\pitchfork_{s,i};\bQ)^{A_b^i},\ee and a corresponding upper bound for the $(n-s)$-th Betti number of $F$, with $MF^\pitchfork_{s,i}$  denoting as before the transversal Milnor fiber of $\of-b$ at an $s$-dimensional stratum in the atypical fiber $\oF_b$, for some $b \in B_s$. It is worth noting that if $s\geq 2$ and, for every $b \in B_s$, the set $\Sigma_b$ contains no strata of dimension $s-1$, then \eqref{nmsgen2i} is an isomorphism.

\medskip

In Section \ref{atyp}, we describe vanishing results for the cohomology of an atypical fiber $F_b$ of $f$, for $b \in B$, as well as an upper bound on the least (possibly) non-vanishing Betti number of $F_b$, with no restriction on the dimension of the singularities. Some of these results already appear in \cite[Sections 6.2, 6.3]{Di} in the case of isolated singularities, including at infinity. 

Assume that the complex polynomial $f\colon \bC^{n+1} \to \bC$ has $s$-dimensional singularities, including at infinity, and let $b \in B$ be a bifurcation value of $f$, with corresponding atypical fiber $F_b=f^{-1}(b)$. Then one has (cf. Theorem \ref{tat}):
\be\label{eqi4}
\wti{H}^k(F_b;\bQ)=0, \ \ {\rm for \ all} \ \ k < n-s-1.
\ee
Moreover, if we assume that 
\[\dim(\Sigma_b^\infty) <s,\] 
e.g., if $s_b=\dim \Sigma_b <s$, then 
\[  \wti{H}^{n-s-1}(F_b;\bQ)=0.\]

The vanishing in \eqref{eqi4} has already been established in a more general setting in \cite{Bre}. However, by using the compactification $\oX$ of $\bC^{n+1}$ given by the closure of the graph $X$ of $f$, a careful analysis of the proof of \eqref{eqi4}, combined with a hypercohomology spectral sequence, yields an upper bound on the least (possibly) non-vanishing Betti number of an atypical fiber $F_b$, with $\dim(\Sigma_b^\infty) =s$, namely (see Theorem \ref{topbb} for a sharper statement):
\be\label{bboundi}
\dim  \wti{H}^{n-s-1}(F_b;\bQ) \leq \sum_{S \subset \Sigma_b^\infty \atop \dim S=s} \mu^\pitchfork_S,
\ee
where the sum runs over all $s$-dimensional Whitney strata contained in $\Sigma_b^\infty$, and $\mu^\pitchfork_S:=\dim \wti{H}^{n-s}(MF^\pitchfork_{S};\bQ)$ is the corresponding transversal Milnor number of $\of-b$ along such an $s$-dimensional stratum $S \subset \Sigma_b^\infty$.

%\be\label{bboundi}\dim  \wti{H}^{n-s-1}(F_b;\bQ) \leq \sum_{S \subset \Sigma_b^\infty \atop \dim S=s} \dim \ \ker \left(M_b - id:  \wti{H}^{n-s}(MF^\pitchfork_{S};\bQ)^{A_S} \to    \wti{H}^{n-s}(MF^\pitchfork_{S};\bQ)^{A_S} \right),\ee Here, for an $s$-dimensional connected stratum $S$ in $\Sigma_b^{\infty}$ (and hence also a top stratum in $\Sigma_b$), $MF^\pitchfork_{S}$ denotes the (transversal) Milnor fiber of $\of-b$ at some point in $S$, $A_S$ is the action of $\pi_1(S)$ on $\wti{H}^{n-s}(MF^\pitchfork_{S};\bQ)$, and $M_b$ is the automorphism induced from the monodromy of the vanishing cycle complex $ \varphi_{\overline{f}-b}Rj_*\bQ_X$. Moreover, the sum on the right-hand side of the inequality \eqref{bboundi} runs over all $s$-dimensional Whitney strata contained in $\Sigma_b^\infty$. 

%\medskip

%In Section \ref{slice}, we recover several of the above-mentioned results by reduction to the case of isolated singularities via generic slicing (see Proposition \ref{prop51} and Theorem \ref{gensl} for details).

\medskip

In Section \ref{top} we discuss a sharp upper bound for the top Betti number of the general fiber of a complex polynomial. More precisely, in Proposition \ref{pr61} we show that if $F$ is the general fiber of a degree $d$ complex polynomial $f\colon \C^{n+1} \to \C$, then (cf. also \cite{ST1,ST2,T2})
\be\label{tbi}
b_n(F)\leq (d-1)^{n+1}.
\ee
Moreover, \eqref{tbi} becomes an equality if $f$ is of $\cG$-type in the sense of \cite{T2}, e.g., a Fermat polynomial. This result is an immediate consequence of the  ``lower semicontinuity property'' for the top Betti number of general fibers, a homological version of which was already proved in \cite{ST1,T2} by homotopy-theoretic methods. In Theorem \ref{lsc} we give  a sheaf theoretic proof of this lower semicontinuity property, which has the advantage that it is also amenable to Hodge-theoretic considerations. 

%Specifically, we prove the following.
%\begin{theorem} Let $P(x,s):\C^{n+1} \times \C \to \C$ be a one-parameter family of degree $d$ polynomials, and denote by $F^s$ the general fiber of $f_s:=P(-,s)$, $s \in \C$. Then there is an injective homomorphism \be\label{injli} H^n_c(F^0;\Q)  \rightarrowtail H^n_c(F^s;\Q) \ee for $s\neq 0$ close enough to $0$. In particular, \begin{center} $b_n(F^s) \geq b_n(F^0)$, for $s \neq 0$ close enough to $0$.\end{center} \end{theorem}

\medskip

Several concrete examples are worked out in Section \ref{ex}. In particular, Example \ref{ex61} shows that the bounds obtained in \eqref{nmsgbi} (see Corollary \ref{nmsgb}) and \eqref{bboundi} (see Theorem \ref{topbb}) are {\it sharp}.

\medskip

We conclude this introduction with several remarks. First, the polynomial $f$ may be replaced by a non-constant morphism $f:X \to C$ from a smooth complex algebraic variety $X$ to a curve. One may then work with a proper extension $\of$ of $f$ defined on a partial compactification $\oX$ of $X$, assuming that the inclusion $X\hookrightarrow \oX$ is affine. The vanishing results remain valid in this broader setting (see, for example, \cite{Bre}), and it is also possible to formulate abstract Betti-number bounds, cf. \cite{Di} for the isolated singularities case.
Second, all complexes of sheaves considered in this paper underlie complexes of mixed Hodge modules, and therefore their hypercohomology groups carry natural mixed Hodge structures. The techniques developed here can be adapted to derive analogous upper bounds for the corresponding mixed Hodge numbers. These refinements will be explored in future work.

\subsection{Acknowledgments} 
The authors would like to express their heartfelt thanks to Mihai Tib\u{a}r. His book on the subject has been an essential guide, and we are especially grateful for his patience in explaining the concepts and subtleties of his work. His generosity and willingness to share his expertise have been truly inspiring and have greatly shaped our understanding of this area of research.

L. Maxim acknowledges support from the Simons Foundation and from the project ``Singularities and Applications'' - CF 132/31.07.2023 funded by the European Union - NextGenerationEU - through Romania's National Recovery and Resilience Plan. 

%%%%%%%%%%%%%%%%%%%%%%%

\section{Vanishing cohomology at a bifurcation value}\label{vc}
In this section we study the vanishing cohomology groups associated to a fixed bifurcation value $b\in B$ of $f$. In view of \eqref{iso-van}, this will in turn provide useful information about the cohomology of the general fiber $F$ of $f$ (see the subsequent Section \ref{genf}). We propose here a unifying approach, from which we recover and extend some of the results from \cite{Bre,Di}.

We begin with the following motivating example.
\bex[Weighted homogeneous polynomial]\label{whp}
Assume $f\colon \bC^{n+1} \to \bC$ is a weighted homogeneous polynomial of degree $d$. % with respect to the positive weights $w_i$ for each variable $x_i$, $i=1,\ldots, n+1$. 
Then $$f\colon \bC^{n+1} \setminus f^{-1}(0) \to \bC^*$$ is a locally trivial fibration, referred to as the global Milnor fibration of $f$. In particular, $0 \in \bC$ may be the only bifurcation value of $f$, so $B\subseteq\{0\}$. (Of course, $B=\{0\}$ if $f$ is homogeneous of degree $\geq 2$. On the other hand, there exist weighted homogeneous polynomials like $f(x_1,x_2)=x_1+x_2^2$  which are fibrations, so in this case $B=\emptyset$.) The general fiber of $f$ (also called in this case the Milnor fiber of $f$) is homotopy equivalent to the Milnor fiber associated to the germ of $f$ at the origin of $\bC^{n+1}$. Note that \eqref{iso-van} becomes in this case just $\widetilde{H}^k(F;\bQ)\cong V^k(0) \cong H^{k+1}(\bC^{n+1}, F; \bQ)$. $\hfill\qed$
\eex

The non-vanishing cohomology of the Milnor fiber of a complex hypersurface singularity germ has been studied in detail, for example in \cite{MPT2}. In particular, the approach developed in \cite{MPT2} motivates the methods used in this section to investigate the vanishing cohomology.

\smallskip

Let $f\colon \bC^{n+1}\rightarrow \mathbb{C}$ be a complex polynomial map as above, and let $\overline{f}\colon \overline{X} \to \bC$ denote the proper extension constructed via the closure $\oX$ of the graph $X$ of $f$. 
We begin with the following important result (see, for example, \cite[Prop.2.2]{Bre}), and include its proof for the reader’s convenience.
\begin{proposition}\label{p21}
For any bifurcation value $b\in B$ of $f$ and non-negative integer $k$, one has the isomorphisms:
\begin{equation}\label{vanb}
V^k(b)\cong \bH^k(\overline{F}_b; \varphi_{\overline{f}-b}Rj_*\bQ_X) \cong H^k(\varphi_{t-b}Rf_*\bQ_X) \cong \varphi_{t-b}(R^kf_*\bQ_X),
\end{equation}
where $\overline{F}_b=\overline{f}^{-1}(b)$, $j\colon X\cong \C^{n+1}\hookrightarrow \overline{X}$ is the affine inclusion, and $t$ is the coordinate function on $\bC$.
\end{proposition}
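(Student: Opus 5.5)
We prove the three isomorphisms in \eqref{vanb} from left to right. Fix $b\in B$ and a small closed disc $D=D_b$ with $D\cap B=\{b\}$, and let $c\in D\setminus\{b\}$, so that $F=F_c$ is a general fiber inside $T(F_b)=f^{-1}(D)$. Set $\mathcal{F}:=Rj_*\bQ_X$, a constructible complex on $\oX$. Since $j$ is an open inclusion, $\overline{f}^{-1}(D)\cap X=f^{-1}(D)$, and therefore
\[
H^*(T(F_b);\bQ)\cong \bH^*(\of^{-1}(D);\mathcal{F}),\qquad H^*(F;\bQ)\cong \bH^*(\oF_c;\mathcal{F}|_{\oF_c})
\]
by base change for the restriction of $Rj_*$ to open subsets. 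For the second isomorphism we also use that $\oF_c\cap\oX$ meets the stratification transversally; this holds because $\of$ is a stratified submersion over $c\notin B$. The relative group $V^k(b)=H^{k+1}(T(F_b),F)$ then becomes the hypercohomology of the cone of the restriction map $\mathcal{F}|_{\of^{-1}(D)}\to \mathcal{F}|_{\oF_c}$, and the task is to identify this cone with vanishing cycles.

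\textbf{First isomorphism.} We use the standard description of nearby and vanishing cycles for a proper map. Because $\of$ is proper, the specialization triangle $i_b^*\mathcal{F}\to\psi_{\of-b}\mathcal{F}\to\varphi_{\of-b}\mathcal{F}\xrightarrow{+1}$ gives on $\oF_b$ the following identifications. First, $\bH^*(\oF_b;i_b^*\mathcal{F})\cong\bH^*(\of^{-1}(D);\mathcal{F})$, by proper base change together with the fact that $\of^{-1}(D)$ retracts onto $\oF_b$. Second, $\bH^*(\oF_b;\psi_{\of-b}\mathcal{F})\cong\bH^*(\of^{-1}(D^*);\mathcal{F})$ restricted to a fiber, which equals $\bH^*(\oF_c;\mathcal{F})$; this follows from proper base change $R\of_*\psi=\psi_t R\of_*$ and local triviality of $R\of_*\mathcal{F}$ over $D^*$. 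The long exact hypercohomology sequence of the triangle then matches the long exact sequence of the pair $(T(F_b),F)$, up to the degree shift by one. This yields $V^k(b)=H^{k+1}(T,F)\cong\bH^k(\oF_b;\varphi_{\of-b}\mathcal{F})$.

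\textbf{Second and third isomorphisms.} By proper base change for nearby and vanishing cycles, $R\of_*\varphi_{\of-b}\mathcal{F}\cong\varphi_{t-b}R\of_*\mathcal{F}$. Since $R\of_*Rj_*=Rf_*$, taking the stalk at $b$ gives $\bH^k(\oF_b;\varphi_{\of-b}\mathcal{F})\cong H^k(\varphi_{t-b}Rf_*\bQ_X)$. Next, $Rf_*\bQ_X$ is a constructible complex on the curve $\bC$ and is locally constant on $D\setminus\{b\}$. The functor $\varphi_{t-b}$ on a curve, applied to a complex that is a local system near $b$ away from $b$, is computed as the cone of the map from stalk to generic stalk, up to shift. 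This functor is exact on the cohomology sheaves: for a single constructible sheaf $G$ on a disc, $\varphi_{t-b}G$ is the complex $G_b\to G_{c}$, and the total complex of the cohomology sheaves yields the identification $H^k(\varphi_{t-b}Rf_*\bQ_X)\cong\varphi_{t-b}(R^kf_*\bQ_X)$. Here the right-hand side is interpreted as the cokernel of $(R^kf_*)_b\to(R^kf_*)_c$.

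\textbf{Main obstacle.} The key point is the surjectivity of the restriction maps, which is exactly the content of \eqref{ses}. It ensures that the kernel terms vanish, so the spectral sequence from the cohomology sheaves degenerates to the stated cokernel. The only real care needed is to check these degree and indexing conventions so that the shift works out as $k$ rather than $k\pm1$.
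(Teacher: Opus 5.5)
Your argument is correct and is essentially the paper's. Both use the specialization triangle with its stalk and nearby terms identified with $H^*(T(F_b);\bQ)$ and $H^*(F_c;\bQ)$, plus proper base change for $\varphi$; you run the triangle on $\oF_b$ for $Rj_*\bQ_X$ and push it forward, whereas the paper runs it directly for $Rf_*\bQ_X$ on $\bC$, and you derive the last isomorphism from \eqref{ses} where the paper cites \cite[Prop.6.3.6]{Di}. One correction: \eqref{ses} gives \emph{injectivity} (not surjectivity) of the restriction maps $\wti{H}^k(T(F_b);\bQ)\to \wti{H}^k(F;\bQ)$, and that injectivity is what kills the kernel terms $H^{-1}(\varphi_{t-b}R^{k}f_*\bQ_X)$ and makes $\varphi_{t-b}$ commute with cohomology sheaves here (it is not exact in general).
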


\begin{proof} The second isomorphism in \eqref{vanb} follows by proper base change for vanishing cycles (e.g., see \cite[Prop.4.2.12]{Di}), using the fact that $\of$ is proper and $f=\of \circ j$. For the identification between the first and last groups in \eqref{vanb}, see, e.g., \cite[Prop.6.3.6]{Di}. The isomorphism between the first and third groups in \eqref{vanb} follows by using the comparison triangle
\[ (Rf_*\bQ_X)_b \overset{comp}{\lra} \psi_{t-b}Rf_*\bQ_X \lra \varphi_{t-b}Rf_*\bQ_X \overset{[1]}{\lra} \]
and the associated long exact cohomology sequence, after noting that:
\begin{itemize}
\item[(i)] $H^k((Rf_*\bQ_X)_b)\cong H^k(D_b; Rf_*\bQ_X)\cong H^k(T(F_b);\bQ)$.
\item[(ii)] For $c\in D_b$, $c\neq b$,  the Cartesian diagram
\[
\xymatrix{
F_c=f^{-1}(c) \ar[d]_{j_c}\ar[r]^{k_c}&X\cong\C^{n+1} \ar[d]^{j}\\
\oF_c=\of^{-1}(c) \ar[r]_{{\overline k}_c}& \oX,
}
\]
yields the following isomorphisms
\[ \begin{split} 
H^k( \psi_{t-b}Rf_*\bQ_X ) &\cong \bH^k(\overline{F}_b; \psi_{\overline{f}-b}Rj_*\bQ_X) 
\overset{(a)}{\cong} \bH^k(\overline{F}_c; {\overline k}_c^*Rj_*\bQ_X) \overset{(b)}{\cong}  \bH^k(\overline{F}_c; R(j_c)_*{k}_c^*\bQ_X) \\ 
&\cong  H^k(F_c; \bQ), \\ 
\end{split} \]
where the first isomorphism follows by proper base change  for nearby cycles (see \cite[Prop.4.2.12]{Di}), (a) follows, e.g., from \cite[Ex.10.4.20]{MS}, 
and (b) uses the base change isomorphism (e.g., see  \cite[Prop.4.3.1, Rem. 4.3.6]{Sch}).
\item[(iii)] The comparison map $comp$ induces in cohomology the same morphism as the one induced by the embedding $F_c \hookrightarrow T(F_b)$.
\end{itemize}
\end{proof}

As Proposition \ref{p21} shows, in order to study the vanishing cohomology of $f$ at $b \in B$ it suffices to understand homological properties of the complex $\varphi_{\overline{f}-b}Rj_*\bQ_X$. Let us choose a Whitney stratification $\cW$ of $\oX$ with respect to which $Rj_*\bQ_X$ is constructible. Then 
\be\label{sup} {\rm Supp} \ \varphi_{\overline{f}-b}Rj_*\bQ_X \subseteq \oF_b \cap \Sing_{\cW}(\of),\ee
where $\Sing_{\cW}(\of)=\bigcup_{S \in \cW} \Sing(\of\vert_S)$ is the stratified singular locus of $\of$. 
Let 
\[\Sigma_b:={\rm Supp} \ \varphi_{\overline{f}-b}Rj_*\bQ_X,\] and denote $s_b:=\dim \Sigma_b$. One has the following vanishing result (e.g., see \cite[Prop.3.1]{Bre}).
\begin{corollary}\label{cor32}
For any $b \in B$, $V^k(b)\cong 0$ for all $k \notin [n-s_b, n]$. 
\end{corollary}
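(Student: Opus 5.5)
By Proposition \ref{p21}, $V^k(b)\cong \bH^k(\oF_b;\varphi_{\of-b}Rj_*\bQ_X)$, so it suffices to prove that this hypercohomology vanishes for $k\notin[n-s_b,n]$. The plan is to use that, after a shift, this complex is perverse and supported on the compact $s_b$-dimensional set $\Sigma_b$. We then combine the support and cosupport conditions with Artin vanishing in the form appropriate to the affine open inclusion $j$.

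\emph{Step 1: perversity.} Since $X\cong\bC^{n+1}$ is smooth of dimension $n+1$, the complex $\bQ_X[n+1]$ is perverse. The inclusion $j$ is affine, so $Rj_*\bQ_X[n+1]$ is perverse on $\oX$. The functor $\varphi_{\of-b}[-1]$ is t-exact for the perverse t-structure. Hence
\[P:=\varphi_{\of-b}Rj_*\bQ_X[n]\]
is a perverse sheaf on $\oF_b$, supported on $\Sigma_b$. We then have $V^k(b)\cong \bH^{k-n}(\Sigma_b;P)$.

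\emph{Step 2: the lower bound.} For a perverse sheaf $P$ supported on a compact space of dimension $s_b$, the support condition says that $\mathcal{H}^i(P)$ is supported on a set of dimension $\le -i$. In particular $\mathcal{H}^i(P)=0$ for $i<-s_b$. Consequently $\bH^i(\Sigma_b;P)=0$ for $i<-s_b$. Translating back, $V^k(b)=0$ for $k<n-s_b$.

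\emph{Step 3: the upper bound, which is the main point.} The support condition alone gives only $i\le s_b$, that is, $k\le n+s_b$. This is too weak, so the sharper bound $k\le n$ has to come from elsewhere. I would use Proposition \ref{p21} directly. There $V^k(b)\cong H^{k+1}(T(F_b),F;\bQ)$, and $V^k(b)$ sits in the long exact sequence
\[\cdots\to H^k(T(F_b))\to H^k(F)\to V^k(b)\to H^{k+1}(T(F_b))\to\cdots.\]
The spaces involved have the following dimensions. The tube $T(F_b)$ is a Stein manifold of complex dimension $n+1$: it is an open subset of $\bC^{n+1}$, and it is the preimage of a disc under a polynomial. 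The fiber $F$ is an affine variety of dimension $n$. By the Andreotti–Frankel/Hamm theorem, $H^{m}(T(F_b))=0$ for $m>n+1$ and $H^m(F)=0$ for $m>n$. This gives $V^k(b)=0$ for $k>n+1$ only.

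To get down to $k\le n$, I would argue via nearby cycles instead. On $\oX\setminus\oX_\infty$ everything is local and classical. Alternatively, the cleanest route is to apply Artin vanishing to $\psi$. Proposition \ref{p21} gives
\[H^k(\psi_{t-b}Rf_*\bQ_X)\cong H^k(F_c).\]
This vanishes for $k>n$, since $F_c$ is affine of dimension $n$. In addition, $H^k((Rf_*\bQ_X)_b)=H^k(T(F_b))$, and $T(F_b)$ deformation retracts onto a neighbourhood of $F_b$, which is affine of dimension $n$. So this group vanishes for $k>n$ as well. The triangle then yields $V^k(b)=0$ for $k>n$, except possibly at $k=n$ from the connecting map, and that case lies inside the allowed range anyway.

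I expect the delicate step to be justifying $H^{n+1}(T(F_b))=0$, via the retraction of the tube onto the $n$-dimensional affine fiber $F_b$. Once this is granted, the two bounds combine to give the corollary.
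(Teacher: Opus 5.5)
Your Steps 1 and 2 match the paper's argument: $\varphi_{\of-b}Rj_*\bQ_X[n]$ is perverse and supported on $\Sigma_b$, and the support condition kills $V^k(b)$ for $k<n-s_b$. The gap is in Step 3.

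The justification you end up relying on is false in general. At a bifurcation value the tube $T(F_b)$ need not retract onto, or have the homotopy type of, the affine fiber $F_b$. This failure comes from vanishing cycles at infinity, and Section \ref{atyp} measures it through $H^*(T(F_b),F_b;\bQ)$. Example \ref{ex61} shows it concretely: $F_0$ has two connected components, while $T(F_0)$ is connected, because $\widetilde H^0(T(F_0))$ injects into $\widetilde H^0(F)=0$ by \eqref{ses}. So the ``delicate step'' you flag cannot be proved the way you propose. The vanishing $H^k(T(F_b))=0$ for $k>n$ is true, but only as a consequence of the injection in \eqref{ses}, not via a retraction.

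More importantly, you do not need it, because you misjudged your first attempt. In the sequence $H^k(F)\to V^k(b)\to H^{k+1}(T(F_b))$, take $k>n$:
\begin{itemize}
\item $H^k(F)=0$, since $F$ is affine of dimension $n$.
\item $H^{k+1}(T(F_b))=0$, since $k+1>n+1$ and $T(F_b)=f^{-1}(D_b)$ is Stein of dimension $n+1$. Indeed, it is biholomorphic to the closed submanifold $\{f(x)=t\}$ of $\bC^{n+1}\times D_b$.
\end{itemize}
So Andreotti--Frankel already gives $V^k(b)=0$ for all $k>n$, not merely for $k>n+1$. The same count works for the comparison triangle, where only $H^m(T(F_b))=0$ for $m>n+1$ enters. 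The group $H^{n+1}(T(F_b))$ plays no role, since $V^n(b)$ lies in the allowed range anyway.

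The paper's route is shorter still. By \eqref{iso-van}, or the surjection in \eqref{ses}, $V^k(b)$ is a direct summand (a quotient) of $\widetilde H^k(F;\bQ)$. That group vanishes for $k>n$ because $F$ is an $n$-dimensional affine variety. Replace your Step 3 with this, or with the corrected Stein-dimension count, and the proof is complete.
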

\begin{proof}
Since $j$ is a quasi-finite affine map and $\bQ_X[n+1]\in Perv(X)$ is a perverse sheaf, we have that 
$Rj_*\bQ_X[n+1] \in Perv(\oX)$. Since the perverse vanishing cycle functor $^p\varphi_{\of}:=\varphi_{\of}[-1]$ preserves perverse sheaves, it follows that $\varphi_{\overline{f}-b}Rj_*\bQ_X[n] \in Perv(\oX)$. Furthermore, since $\varphi_{\overline{f}-b}Rj_*\bQ_X[n]$ is supported on $\Sigma_b$, its restriction 
\be\label{pp} \cP_b:=\varphi_{\overline{f}-b}Rj_*\bQ_X[n] \vert_{\Sigma_b} \ee
is a perverse sheaf on $\Sigma_b$. Formula \eqref{vanb} and the support condition for perverse sheaves then yield that
\begin{center} $V^k(b) \cong \bH^{k-n}(\Sigma_b; \cP_b) \cong 0$ for $k-n \notin [-s_b,s_b]$.
\end{center}
Finally,  it follows from \eqref{iso-van} that $V^k(b)=0$ for all $k>n$.
\end{proof}

As a special case, we get the following (see  \cite{ST}, \cite{Hamm}, and also \cite[Prop.6.2.19(i)]{Di}).
\bc\label{iso}
If, in the above notations, we assume moreover that $s_b=0$, then $V^k(b)\cong 0$ for all $k \neq n$, and
\be\label{e23}
V^n(b)\cong \bigoplus_{p\in \Sigma_b} \cH^n(\varphi_{\of-b}Rj_*\bQ_X)_p. 
\ee
\hfill\qed
\ec

For $b \in B$ a bifurcation value of $f$ as above, let 
\begin{center} $\Sigma^a_b:=\Sigma_b \cap X=\Sing(f) \cap F_b$ and $\Sigma_b^\infty:=\Sigma_b \cap \oX_\infty$,\end{center} so that $\Sigma_b=\Sigma^a_b \cup \Sigma_b^\infty$. For $p\in \Sigma^a_b$ and $k \in \bZ$, one has the isomorphism
\be\label{mfa} \cH^k(\varphi_{\of-b}Rj_*\bQ_X)_p \cong \cH^k(\varphi_{f-b}\bQ_X)_p \cong \widetilde{H}^k(MF_{f-b,p};\bQ),\ee
with $MF_{f-b,p}$ denoting the Milnor fiber of $f-b$ at the singular point $p$ (which in this case can be identified with the Milnor fiber of $\of-b$ at $p$). For $p\in \Sigma^\infty_b$, one can make use of the special geometry of the compactification $\oX$ to show the following identification:

\begin{lemma}\label{identif}
Assume that $s_b=0$, and let $p\in \Sigma^\infty_b=\Sigma_b \cap \oX_\infty$. Then
\be\label{mfi} \left(\cH^n(\varphi_{\of-b}Rj_*\bQ_X)_p\right)^\vee \cong \cH^n(\varphi_{\of-b}\bQ_{\oX})_p \cong \widetilde{H}^n(MF_{\of-b,p};\bQ),\ee
with $MF_{\of-b,p}$ the Milnor fiber of $\of-b$ at the point $p\in \Sigma^\infty_b$, and $(-)^\vee$ denoting the dual of a rational vector space. In particular,
\be\label{mfi2}
\dim \left( \cH^n(\varphi_{\of-b}Rj_*\bQ_X)_p \right) =\dim \left( \cH^n(\varphi_{\of-b}\bQ_{\oX})_p \right)= \dim \widetilde{H}^n(MF_{\of-b,p};\bQ).
\ee
\end{lemma}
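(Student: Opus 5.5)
The plan is to compare $\varphi_{\of-b}Rj_*\bQ_X$ with $\varphi_{\of-b}\bQ_{\oX}$ in two steps: first use Verdier duality to pass from $Rj_*$ to $j_!$, and then use the product structure of $\oX_\infty$ to pass from $j_!\bQ_X$ to $\bQ_{\oX}$.

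\emph{Step 1 (duality).} Since $s_b=0$, the point $p$ is isolated in $\Sigma_b$. As in the proof of Corollary \ref{cor32}, $\cP:=\varphi_{\of-b}Rj_*\bQ_X[n]$ is a perverse sheaf which, on a small neighborhood $U$ of $p$, is supported on $\{p\}$. A perverse sheaf supported at a point is a skyscraper concentrated in degree $0$, and Verdier duality acts on it by taking the dual vector space. Hence
\[
\left(\cH^n(\varphi_{\of-b}Rj_*\bQ_X)_p\right)^\vee \cong \cH^0(\cD\cP)_p .
\]
Next we use the two facts below.
\begin{itemize}
\item Vanishing cycles commute with duality, in the form $\cD\circ {}^p\varphi\cong {}^p\varphi\circ\cD$.
\item Since $X$ is smooth of complex dimension $n+1$, we have $\cD(Rj_*\bQ_X[n+1])\cong j_!\cD(\bQ_X[n+1])\cong j_!\bQ_X[n+1]$.
\end{itemize}
Together they give $\cD\cP\cong \varphi_{\of-b}j_!\bQ_X[n]$. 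The Tate twists and orientation choices are irrelevant over $\bQ$ for dimension counts. This yields
\[
\left(\cH^n(\varphi_{\of-b}Rj_*\bQ_X)_p\right)^\vee\cong \cH^n(\varphi_{\of-b}j_!\bQ_X)_p .
\]

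\emph{Step 2 (removing $j_!$).} Let $i\colon \oX_\infty\hookrightarrow\oX$ be the closed complement of $X$. Applying the exact functor $\varphi_{\of-b}$ to the triangle $j_!\bQ_X\to\bQ_{\oX}\to i_*\bQ_{\oX_\infty}\overset{[1]}{\to}$ gives a triangle relating $\varphi_{\of-b}j_!\bQ_X$ and $\varphi_{\of-b}\bQ_{\oX}$. It therefore suffices to show that $\varphi_{\of-b}(i_*\bQ_{\oX_\infty})\cong i_*\varphi_{\of|_{\oX_\infty}-b}\bQ_{\oX_\infty}=0$. This is where the special geometry of the graph closure enters. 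Setting $x_0=0$ in $f^h(x_0,x)-tx_0^d$ shows that $\oX_\infty=\{f_d(x)=0\}\times\bC\subset H_\infty\times\bC$, where $f_d$ is the top degree part of $f$. Moreover $\of|_{\oX_\infty}$ is the projection onto the $\bC$ factor. A product projection $Z\times\bC\to\bC$ is topologically locally trivial near every point: it is a stratified submersion for the product stratification. Consequently the constant sheaf has no vanishing cycles, and the third term vanishes. This gives $\cH^n(\varphi_{\of-b}j_!\bQ_X)_p\cong\cH^n(\varphi_{\of-b}\bQ_{\oX})_p$, which is the first isomorphism in \eqref{mfi}.

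\emph{Step 3.} The second isomorphism in \eqref{mfi} is the standard stalk formula $\cH^k(\varphi_g\bQ)_p\cong\widetilde H^k(MF_{g,p};\bQ)$. It holds for the function $g=\of-b$ on the (possibly singular) hypersurface $\oX$, where $MF_{g,p}$ is the Milnor fiber of $g$ at $p$ in $\oX$. Taking dimensions then gives \eqref{mfi2}, since a finite-dimensional rational vector space and its dual have the same dimension.

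The step I expect to require the most care is Step 1. One has to check that duality commutes with vanishing cycles with the correct shifts, and that the localization to the point $p$ is legitimate. For the latter, the skyscraper $\cP|_U$ should be dual to $(\cD\cP)|_U$ in degree $0$, which holds because restriction to the open set $U$ commutes with $\cD$. The geometric input in Step 2, that $\oX_\infty$ is a product over the $t$-line, is straightforward once it is written down.
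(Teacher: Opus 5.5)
Your proposal is correct and follows essentially the same route as the paper. The paper first uses the triangle $j_!\bQ_X\to\bQ_{\oX}\to i_*\bQ_{\oX_\infty}$ and the product structure of $\oX_\infty$ to get $\varphi_{\of-b}j_!\bQ_X\cong\varphi_{\of-b}\bQ_{\oX}$, and then applies Verdier duality, which exchanges $Rj_*$ and $j_!$ under ${}^p\varphi$, at the isolated support point $p$; you do the same two steps in the opposite order, and your description $\oX_\infty=\{f_d=0\}\times\bC$, with $\of|_{\oX_\infty}$ the projection, is exactly the geometric input the paper cites from Dimca.
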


\begin{proof} In order to simplify the notations, we can assume without any loss of generality that $b=0$, and write $\Sigma^\infty$ in place of $\Sigma^\infty_b$. Furthermore, by restricting $f$ over a sufficiently small disc at the origin, we may further assume that $b=0$ is the only bifurcation value.

Let $j:X  \hookrightarrow \oX$ and $i: \oX_\infty  \hookrightarrow \oX$ be the inclusion maps. Consider the distinguished triangle
\[ j_!\Q_X \lra \Q_{\oX} \lra i_* \Q_{\oX_\infty} \overset{[1]}{\lra} \]
and apply the functor $\varphi_{\of}$ to it to get the triangle
\begin{equation}\label{tr5}
\varphi_{\of} j_!\Q_X \lra \varphi_{\of} \Q_{\oX} \lra \varphi_{\of} i_* \Q_{\oX_\infty} \overset{[1]}{\lra}
\end{equation}
Using the base change property for vanishing cycles together with the product structure of $\oX_\infty$, one can show as in \cite[Ex.6.2.20, p.190]{Di} that 
\begin{equation}\label{tr6}
\varphi_{\of} i_* \Q_{\oX_\infty} \cong 0.
\end{equation}
It then follows from \eqref{tr5}  and \eqref{tr6} that
\begin{equation}\label{tr7}
\varphi_{\of} j_!\Q_X \cong \varphi_{\of} \Q_{\oX}.
\end{equation}
Next note that since $X$ is smooth, $\Q_X[n+1]$ is Verdier self-dual. Moreover, since Verdier duality commutes with the perverse vanishing cycle functor ${^p}\varphi_{\of}=\varphi_{\of}[-1]$, we have
\begin{equation}\label{tr8}
\begin{split}
\cD \left(\varphi_{\of} j_!\Q_X[n] \right) &\cong 
\cD \left({^p}\varphi_{\of} j_!\Q_X[n+1] \right) \cong
{^p}\varphi_{\of} Rj_* \cD \left(\Q_X[n+1] \right) \\ &\cong
{^p}\varphi_{\of} Rj_* \Q_X[n+1] 
\cong \varphi_{\of} Rj_*\Q_{X}[n].
\end{split}
\end{equation}
In particular, the perverse sheaves $\varphi_{\of} j_!\Q_X[n]$ and $\varphi_{\of} Rj_*\Q_{X}[n]$ have the same support, which is zero-dimensional by assumption. Thus a point $p\in \Sigma^\infty \subset \oF_0 \cap H_\infty$ is an isolated point in the support of these perverse sheaves, hence the stalk and costalk cohomologies of any of these perverse sheaves at $p$ coincide, and they are concentrated in degree zero. In particular, if $i_p:\{p\} \to F_0$ is the point inclusion, then
\begin{equation}\label{tr9}
i_p^* \varphi_{\of} j_!\Q_X[n] \cong i_p^! \varphi_{\of} j_!\Q_X[n]
\end{equation}
have cohomology concentrated in degree $0$. We then have the following sequence of isomorphisms:
\begin{equation}\label{tr10}
\begin{split}
\cH^n(\varphi_{\of}\bQ_{\oX})_p & \cong H^n(i_p^*\varphi_{\of}\bQ_{\oX}) \overset{\eqref{tr7}}{\cong} H^n(i_p^*\varphi_{\of} j_!\Q_X) 
\overset{\eqref{tr9}}{\cong} H^0(i_p^!\varphi_{\of} j_!\Q_X[n]) \\
&\overset{\eqref{tr8}}{\cong} H^0(i_p^! \cD\left(\varphi_{\of} Rj_*\Q_{X}[n] \right)) 
\cong H^0( \cD\left( i_p^* \varphi_{\of} Rj_*\Q_{X}[n] \right)) \\
& \cong H^0(  i_p^* \varphi_{\of} Rj_*\Q_{X}[n] )^\vee
\cong H^n(  i_p^* \varphi_{\of} Rj_*\Q_{X})^\vee \\
& \cong \left(\cH^n(\varphi_{\of}Rj_*\bQ_X)_p\right)^\vee ,
\end{split}
\end{equation}
which proves the first equality of \eqref{mfi}. The second equality in \eqref{mfi} is well known (e.g., see \cite[Ex.10.4.15]{MS}).
\end{proof}

% that one has for $k \in \bZ$, \be\label{mfi} \cH^k(\varphi_{\of-b}Rj_*\bQ_X)_p \cong \cH^k(\varphi_{\of-b}\bQ_{\oX})_p \cong \widetilde{H}^k(MF_{\of-b,p};\bQ),\ee with $MF_{\of-b,p}$ denoting the Milnor fiber of $\of-b$ at the point $p\in \Sigma^\infty_b$. $\hfill\square$

\begin{remark}
The first equality in \eqref{mfi2} was also stated in \cite[Ex.6.2.20]{Di}; however, the argument given there appears to contain a gap (specifically, in the use of Verdier duality). $\hfill\qed$
\end{remark}

%An immediate consequence of Lemma \eqref{identif} is the following.
%\begin{corollary}\label{identif2}
%\end{corollary}

In what follows, we  investigate the non-vanishing range of the vanishing cohomology $V^k(b)$ at $b\in B$, that is, for $k \geq n-s_b$.
As the case $s_b=0$ was already considered in Corollary \ref{iso}, here we assume that $s_b>0$.

Denote by $\cS_b$ a Whitney stratification of ${\oF}_b$ so that $\varphi_{\overline{f}-b}Rj_*\bQ_X$ is $\cS_b$-constructible and $\Sigma_b$ is the union of strata of complex dimension $\leq s_b$. Upon refining $\cW$, we can assume that any strata in $\cS_b$ are also strata of $\cW$. Let us make the following notations:
\begin{itemize}
\item[] $\Sigma_{b}^{\ell}=$ the union of $\ell$-dimensional strata of $\cS_b$, which are contained in $\Sigma_b$;
\item[] $U_{b}^{\ell}=$ the union of strata of $\cS_b$ of dimension $\geq \ell$, which are contained in $\Sigma_b$.
\end{itemize}
Then each $U_{b}^{\ell}$ is an open subset of $\Sigma_b$, and 
\be\label{of}\Sigma_{b}^{s_b}=U_{b}^{s_b} \subseteq U_{b}^{s_b-1} \subseteq \cdots \subseteq U_{b}^0=\Sigma_b\ee
with $\Sigma_{b}^{\ell} = U_{b}^{\ell} \setminus U_{b}^{\ell+1}$. 
With $\cP_b$ as in \eqref{pp}, set $$\cP_b^{\ell}:=\cP_b\vert_{U_{b}^{\ell}}$$ with $\cP_b^0=\cP_b$, and note that $\cP_b^{\ell}$ is a perverse sheaf of $U_{b}^{\ell}$. 

We are now ready to prove the following result, which can be viewed as the global counterpart of \cite[Thm.3.1]{MPT2}:
\bt\label{t14a}
Let $f\colon \bC^{n+1} \to \bC$ be a nonconstant polynomial function, $b\in B$ a bifurcation value of $f$, and assume $s_b>0$. Then the following assertions hold:
\begin{itemize}
\item[(a)]  for any $j=0,\ldots, s_b-1$ there is a monomorphism
\be\label{newthad}
V^{n-s_b+j}(b) \hookrightarrow \bH^{-s_b+j}(U_{b}^{s_b-j};\cP_b^{s_b-j}).
\ee
\item[(b)]  if $s_b\geq 2$, then for any $j=0,\ldots,s_b-2$ there is an isomorphism
\be\label{newthad2}
V^{n-s_b+j}(b) \cong \bH^{-s_b+j}(U_{b}^{s_b-j-1};\cP_b^{s_b-j-1}).
\ee
\end{itemize}
\et

\begin{proof}
(a) \  Let us fix an integer $j=0,\ldots, s_b-1$.  For any $0 \leq \ell \leq s_b-j-1$ and using the above notations, consider the inclusions
$$\Sigma_{b}^\ell \overset{v_\ell}{\hookrightarrow} U_{b}^\ell \overset{u_\ell}{\hookleftarrow} U_{b}^{\ell+1}$$
and the attaching distinguished triangle on $U_{b}^\ell$
\be\label{stl}{v_\ell}_!{v_\ell}^!\cP_{b}^\ell \lra \cP_{b}^\ell \lra R{u_\ell}_*{u_\ell}^*\cP_{b}^\ell \overset{[1]}{\lra}\ee
with ${v_\ell}_!={v_\ell}_*$ and ${u_\ell}^*\cP_{b}^\ell \cong \cP_{b}^{\ell+1}$. 
The hypercohomology long exact sequence associated to \eqref{stl} contains the terms
\be\label{stl2} \cdots \to \bH^{-s_b+j}(\Sigma_{b}^\ell; {v_\ell}^!\cP_b^\ell) \to \bH^{-s_b+j}(U_b^\ell;\cP_b^\ell) \to   \bH^{-s_b+j}(U_b^{\ell+1};\cP_b^{\ell+1}) \to \cdots\ee
The group $\bH^{-s_b+j}(\Sigma_b^\ell; {v_\ell}^!\cP_b^\ell)$ is computed by a hypercohomology spectral sequence whose $E_2$-term is given by
\be\label{spsup} E_2^{p,q}=H^p(\Sigma_b^\ell; \cH^q({v_\ell}^!\cP_b^\ell)).\ee
The complex ${v_\ell}^!\cP_b^\ell$ is  constructible on $\Sigma_b^\ell$, hence its cohomology sheaves are local systems on every connected component of $\Sigma_b^\ell$. By reasons of dimension, we then get that 
$E_2^{p,q}=0$ if $p<0$ or $p>2\ell$. On the other hand, the costalk condition for the perverse sheaf $\cP_b^\ell$ on $U_b^\ell$ (with the induced stratification) yields that $\cH^q({v_\ell}^!\cP_b^\ell) \cong 0$ for all $q < -\ell$. Therefore, $E_2^{p,q}=0$ if $q<-\ell$. 
Altogether, since $-s_b+j<-\ell$, we get that $E_2^{p,q}=0$ for any pair of integers $(p,q)$ with $p+q=-s_b+j$. The spectral sequence \eqref{spsup} then implies that 
\be\label{stl3}\bH^{-s_b+j}(\Sigma_\ell; {v_\ell}^!\cP_b^\ell)\cong 0.\ee
Back to \eqref{stl2}, in view of \eqref{stl3} we get an injective map
\be \bH^{-s_b+j}(U_b^\ell;\cP_b^\ell) \hookrightarrow   \bH^{-s_b+j}(U_b^{\ell+1};\cP_b^{\ell+1})\ee for any $0 \leq \ell \leq s_b-j-1$.
Together with \eqref{vanb} and recalling that $\Sigma_b=U_b^0$,  by iterating the above procedure one gets a composition of monomorphisms
$$V^{n-s_b+j}(b) \cong \bH^{-s_b+j}(U_b^0;\cP_b) \hookrightarrow  \bH^{-s_b+j}(U_b^1;\cP_b^1) \hookrightarrow \cdots \hookrightarrow  \bH^{-s_b+j}(U_b^{s_b-j};\cP_b^{s_b-j}).$$
This completes the proof of \eqref{newthad}.

 (b) \  Let us now assume that $s_b \geq 2$ and fix an integer $j=1,\ldots,s_b-1$. 

In view of \eqref{stl3}, the long exact sequence \eqref{stl2} contains the terms:
\be\label{stl2b} \cdots \to \bH^{-s_b+j-1}(\Sigma_b^\ell; {v_\ell}^!\cP_b^\ell) \to \bH^{-s_b+j-1}(U_b^\ell;\cP_b^\ell) \to   \bH^{-s_b+j-1}(U_b^{\ell+1};\cP_b^{\ell+1}) \to 0 \to \cdots\ee
For any $0 \leq \ell \leq s_b-j-1$, the same arguments used for studying the spectral sequence \eqref{spsup} yield that $\bH^{-s_b+j-1}(\Sigma_\ell; {v_\ell}^!\cP_b^\ell) \cong 0$ since $-s_b+j-1<-\ell$. In particular, \eqref{stl2b} yields isomorphisms
$$\bH^{-s_b+j-1}(U_b^\ell;\cP_b^\ell) \cong   \bH^{-s_b+j-1}(U_b^{\ell+1};\cP_b^{\ell+1}) $$
for all $0 \leq \ell \leq s_b-j-1$. Together with \eqref{vanb}, this then yields isomorphisms
\be\label{rei}V^{n-s_b+j-1}(b) \cong \bH^{-s_b+j-1}(U_b^0;\cP_b) \cong  \bH^{-s_b+j-1}(U_b^1;\cP_b^1) \cong \cdots \cong  \bH^{-s_b+j-1}(U_b^{s_b-j};\cP_b^{s_b-j}).\ee
The isomorphism \eqref{newthad2} is then obtained by reindexing (i.e., replacing $j$ by $j+1$ in \eqref{rei}).
\end{proof}

An immediate consequence of Theorem \ref{t14a}(b) is the following.  
\begin{corollary}\label{c2.2}
If $s_b \geq 2$, then for any $j=0,\ldots, s_b-2$, the group $V^{n-s_b+j}(b)$ depends only on the strata of dimension $\geq s_b-j-1$ in $\Sigma_b$.
\end{corollary}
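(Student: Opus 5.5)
The corollary follows by reading off Theorem \ref{t14a}(b) directly. Fix $b\in B$ with $s_b\geq 2$ and an integer $j$ with $0\leq j\leq s_b-2$. Part (b) of the theorem gives
\[ V^{n-s_b+j}(b)\cong \bH^{-s_b+j}(U_b^{s_b-j-1};\cP_b^{s_b-j-1}). \]
So it is enough to see that the right-hand side is determined by the strata of $\Sigma_b$ of dimension $\geq s_b-j-1$.

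By definition, $U_b^{s_b-j-1}$ is the union of the strata of $\cS_b$ contained in $\Sigma_b$ whose dimension is at least $s_b-j-1$. It is an open subset of $\Sigma_b$, by the filtration \eqref{of}. The complex $\cP_b^{s_b-j-1}$ is the restriction of $\cP_b=\varphi_{\of-b}Rj_*\bQ_X[n]\vert_{\Sigma_b}$ to this open set. Hence both the space and the coefficient complex in the hypercohomology group involve only these strata, together with the restriction of the vanishing cycle complex to them.

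The lower-dimensional strata (those in $\Sigma_b\setminus U_b^{s_b-j-1}$) do not enter at all. Their contributions were already shown to vanish in the proof of Theorem \ref{t14a}(b): the costalk terms $\bH^{-s_b+j}(\Sigma_b^\ell;v_\ell^!\cP_b^\ell)$ are zero for all $\ell\leq s_b-j-2$.

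There is no real obstacle here. The only point needing care is what "depends only on" means. We should state it as: the group is computed as the hypercohomology of the restriction of $\varphi_{\of-b}Rj_*\bQ_X$ to the union of strata of dimension $\geq s_b-j-1$. This can be evaluated by the hypercohomology spectral sequence $E_2^{p,q}=H^p(U_b^{s_b-j-1};\cH^q(\cP_b^{s_b-j-1}))$.
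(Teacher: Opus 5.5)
Your proposal is correct and follows the paper's argument: the paper states the corollary as an immediate consequence of Theorem \ref{t14a}(b), because the isomorphism $V^{n-s_b+j}(b)\cong \bH^{-s_b+j}(U_b^{s_b-j-1};\cP_b^{s_b-j-1})$ involves only the open union $U_b^{s_b-j-1}$ of strata of dimension $\geq s_b-j-1$ and the restriction of $\cP_b$ to it. One small precision in your side remark: the iterated isomorphisms in the proof of (b) need the costalk terms $\bH^{m}(\Sigma_b^\ell;v_\ell^!\cP_b^\ell)$ to vanish in both degrees $m=-s_b+j$ and $m=-s_b+j+1$ for $\ell\leq s_b-j-2$ (both hold since $m<-\ell$), but this does not affect your argument, since you invoke the theorem directly.
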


\begin{remark}\label{dif} 
Assuming $s_b\geq 2$ and fixing  $j=0,\ldots, s_b-2$, if there are no strata of dimension $s_b-j-1$ in $\Sigma_b$, then $U_b^{s_b-j}=U_b^{s_b-j-1}$, so in this case \eqref{newthad2} is a finer result than \eqref{newthad}. In general, the right-hand side of either \eqref{newthad} or \eqref{newthad2} can be computed via the hypercohomology spectral sequence, though explicit computations can be tedious; see below for a special case. $\hfill\square$ \end{remark}

In what follows, we specialize Theorem \ref{t14a} to the case $j=0$, to derive more explicit information about $V^{n-s_b}(b)$, that is, the lowest (possibly non-trivial) vanishing cohomology group at the bifurcation value $b \in B$ of $f$.
We first introduce some notations.

Recall from \eqref{of} that $$U_b^{s_b}=\Sigma_b^{s_b}=\bigsqcup_i \Sigma_b^{s_b,i},$$
where $\Sigma_b^{s_b,i}$ are the $s_b$-dimensional (connected) strata of $\Sigma_b$. With $\cP_b$ as in \eqref{pp}, set
\begin{equation}\label{pp11}
\cP_b^{s_b,i}:=\cP_b\vert_{\Sigma_b^{s_b,i}},
\end{equation}
and we have the following.
\begin{lemma}\label{identif3}
In the above notations, we have
\begin{equation}\label{pp12}
\cP_b^{s_b,i} \cong \mathcal{L}_b^{s_b,i}[s_b],
\end{equation}
where $\mathcal{L}_b^{s_b,i}$ is a local system on $\Sigma_b^{s_b,i}$ whose stalk ${L}_b^{s_b,i}$ at a point $p_{s_b,i} \in \Sigma_b^{s_b,i}$ is a $\Q$-vector space of dimension \begin{center} $\mu^\pitchfork_{s_b,i}:= \dim \widetilde{H}^{n-s_b}(MF^\pitchfork_{s_b,i};\bQ)$,\end{center}
with $MF^\pitchfork_{s_b,i}$ the transversal Milnor fiber of $\of-b$ along $\Sigma_b^{s_b,i}$.
\end{lemma}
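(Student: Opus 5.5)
The plan has two parts. First, the restriction of $\cP_b$ to a top-dimensional stratum is a shifted local system. Second, its stalk dimension is computed via a transversal slice, treating affine points and points at infinity separately.

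\emph{Step 1: the local system.} Since $\cP_b$ is $\cS_b$-constructible, its restriction to the connected stratum $\Sigma_b^{s_b,i}$ has locally constant cohomology sheaves. The stratum is open in $\Sigma_b$ because $s_b=\dim\Sigma_b$. The stalk condition then gives $\cH^q(\cP_b)_p=0$ for $q>-s_b$ at points $p\in\Sigma_b^{s_b,i}$, and the costalk condition, using $v^!=v^*$ on an open stratum, gives vanishing for $q<-s_b$. Hence $\cP_b^{s_b,i}\cong \mathcal{L}_b^{s_b,i}[s_b]$ with $\mathcal{L}_b^{s_b,i}=\cH^{-s_b}(\cP_b)|_{\Sigma_b^{s_b,i}}$. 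Its stalk at $p$ is
\[
L_b^{s_b,i}=\cH^{n-s_b}(\varphi_{\of-b}Rj_*\bQ_X)_p .
\]

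\emph{Step 2: the stalk dimension.} Choose a normal slice $N$ of complex dimension $n+1-s_b$ in $\oX$ through $p$, transversal to the Whitney strata of $\cW$. By the standard transversal slice property of vanishing cycles, restricting $\varphi_{\of-b}Rj_*\bQ_X$ to $N$ commutes with $\varphi$ up to shift. Thus $\cP_b|_N[-s_b]$ is the perverse vanishing cycle complex $\varphi_{\of|_N-b}(Rj_*\bQ_X|_N)[n-s_b]$, and $p$ is an isolated point of its support. This reduces the computation to the isolated ($s_b=0$) situation in one dimension lower.

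If $p\in\Sigma_b^a$, then $N\cap X$ is smooth and \eqref{mfa} gives
\[
L_b^{s_b,i}\cong \widetilde H^{n-s_b}(MF_{(f-b)|_N,p};\bQ)=\widetilde H^{n-s_b}(MF^\pitchfork_{s_b,i};\bQ).
\]
If $p\in\Sigma_b^\infty$, I repeat the proof of Lemma \ref{identif} on $N$. The triangle $j_!\to \mathrm{id}\to i_*i^*$ together with $\varphi_{\of}i_*\bQ_{\oX_\infty}=0$ (restricted to $N$) yields $\varphi j_!\bQ\cong\varphi\bQ_{\oX}$ near $p$. Verdier duality exchanges $\varphi j_!$ and $\varphi Rj_*$ up to the appropriate shift. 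Since $p$ is isolated in the support on the slice, stalk and costalk agree, and dualizing gives
\[
\bigl(L_b^{s_b,i}\bigr)^\vee\cong\widetilde H^{n-s_b}(MF_{\of|_N-b,p};\bQ),
\]
which is the transversal Milnor fiber cohomology. Taking dimensions gives $\mu^\pitchfork_{s_b,i}$ in both cases.

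The main obstacle is the point-at-infinity case. It requires justifying that slicing transversally commutes with $\varphi$, with $Rj_*$ and $j_!$, and with duality. Equivalently, one needs that the vanishing of $\varphi_{\of}i_*\bQ_{\oX_\infty}$, which uses the product structure of $\oX_\infty$, survives restriction to $N$. My first approach is to avoid slicing the duality argument altogether. I would run the argument of Lemma \ref{identif} globally, on the open set $U_b^{s_b}$. There the perverse sheaves $\varphi j_!\bQ_X[n]$ and $\varphi Rj_*\bQ_X[n]$ are both shifted local systems, by Step 1 applied to each, and they are Verdier dual. Dualizing a local system shifted by $[s_b]$ gives the dual local system, so the stalks of $\mathcal{L}_b^{s_b,i}$ are dual to those of $\cH^{n-s_b}(\varphi_{\of-b}\bQ_{\oX})$. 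Only this last identification with transversal Milnor fiber cohomology then needs the slice, and for the constant sheaf $\bQ_{\oX}$ that is standard.
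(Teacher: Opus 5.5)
Your argument is correct. Step 1 (the support condition together with the costalk condition, using $v^!=v^*$ on the open top stratum) is just a spelled-out version of the paper's ``by constructibility''. The genuine difference is in how you treat points at infinity.

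\textbf{The paper's route: slice first, then dualize.} It takes a \emph{linear} section $\oE\subset\bC P^{n+1}$ of codimension $s_b$. Then $\oX\cap(\oE\times\bC)$ is again the graph closure of the polynomial $g=f|_E$, with product structure at infinity. Base change gives $\oi_b^*\varphi_{\of-b}Rj_*\bQ_X\cong\varphi_{\og-b}Rj_{E*}\bQ_Y$, the point $p$ becomes isolated in the support, and Lemma \ref{identif} applies verbatim to $\og$. This is exactly how the paper avoids the obstacle you flagged for an arbitrary analytic slice $N$.

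\textbf{Your final route: dualize first, then slice.} Since $\varphi_{\of-b}i_*\bQ_{\oX_\infty}=0$ holds globally and $\cD$ commutes with ${}^p\varphi$, you get $\cP_b\vert_{U_b^{s_b}}\cong\cD\bigl(\varphi_{\of-b}\bQ_{\oX}[n]\vert_{U_b^{s_b}}\bigr)$. On an $s_b$-dimensional manifold one has $\cD(\mathcal{M}[s_b])=\mathcal{M}^\vee[s_b]$. So openness of the top stratum in $\Sigma_b$ replaces the isolated-support (stalk $=$ costalk) step of Lemma \ref{identif}. Only the constant sheaf $\bQ_{\oX}$ then has to be restricted to a transversal slice, and any such slice will do.

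\textbf{What each buys.} Your version gives a global, $\pi_1$-compatible identification $\mathcal{L}_b^{s_b,i}\cong\bigl(\cH^{n-s_b}(\varphi_{\of-b}\bQ_{\oX})\vert_{\Sigma_b^{s_b,i}}\bigr)^\vee$. This refines the stalk identification $L_S\cong\widetilde H^{n-s}(MF^\pitchfork_S;\bQ)^\vee$ used in Section \ref{atyp}, and it requires no check that the slice stays inside the class of graph closures. The paper's version instead produces the concrete transverse model $\og\colon\oY\to\bC$. That model is what is later used to compute $\mu^\pitchfork$ via the Milnor-number jump formula in Remark \ref{calcmu} and in the examples.
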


\begin{proof}
Formula \eqref{pp12} follows by constructibility, since $\cP_b^{s_b,i}$ is a perverse sheaf on the $s_b$-dimensional stratum $\Sigma_b^{s_b,i}$. 

In order to identify the stalk of the local system $\mathcal{L}_b^{s_b,i}$, let $\oE \subset \C P^{n+1}$ be a general subspace of codimension $s_b$, intersecting $\Sigma_b^{s_b,i}$ transversally at the point $p_{s_b,i}$. Denote by $E=\oE \cap \{x_0=1\}\subset \C^{n+1}$ the corresponding affine linear subspace and let $g:=f|_E$. Then the closure $\overline{Y}$ of the graph $Y$ of $g$ can be identified with $\oX \cap (\overline{E} \times \C)$, and the proper extension $\overline{g}$ of $g$ can be identified with the restriction of $\of$ to $\oY$. Note that if $G_b=g^{-1}(b)$, with projective closure $\overline{G}_b=\overline{g}^{-1}(b)$, then $\overline{G}_b=\overline{F}_b \cap \overline{E}$. Denote the relevant inclusion maps by the arrows in the following diagram (with $X\cong \C^{n+1}$ the graph of $f$):
\[
\xymatrix{
\oG_b=\oF_b \cap \oE \ar[d]_{\oi_b}\ar[r] &\overline{Y}=\oX \cap (\overline{E} \times \C) \ar[d]^{\oi} & Y=X \cap E \ar[l]_{j_E}  \ar[d]^{i}\\
\oF_b \ar[r]& \oX & X \ar[l]^{j}
}
\]
The base change  formula for vanishing cycles (e.g., see \cite[Lem.4.3.4]{Sch}) yields that
\be\label{1000} 
\begin{split} 
\oi_b^*\varphi_{\of-b}(Rj_*\bQ_X) \cong 
\varphi_{\og-b} (\oi^*Rj_*\Q_X) \cong \varphi_{\og-b}  (Rj_{E*}{\bQ}_Y),
\end{split}\ee
where  the second isomorphism uses the base change formula from \cite[Prop.4.3.1]{Sch}.

By transversality, the function $\og-b$ has an {\it isolated} (stratified) singularity at $p_{s_b,i}$, and hence the point $p_{s_b,i}$ is an isolated point in the support of the 
perverse sheaf $\varphi_{\og-b} (Rj_{E*}{\bQ}_{Y}[n-s_b])$. Therefore, the stalk 
%and costalk 
cohomology of $\varphi_{\og-b} (Rj_{E*}{\bQ}_{Y}[n-s_b])$ at $p_{s_b,i}$ 
%are isomorphic, and they are 
is concentrated in degree $0$, or, equivalently, the stalk cohomology of $\cP_b^{s_b,i}$ is concentrated in degree $-s_b$. 
Moreover, 
%, where they are free. 
formulae \eqref{mfa} and  \eqref{mfi}  identify this stalk as a $\Q$-vector space of dimension $\dim \widetilde{H}^{n-s_b}(MF^\pitchfork_{s_b,i};\bQ)$, with $MF^\pitchfork_{s_b,i}$ the Milnor fiber of $\og-b$ at $p_{s_b,i}$ (i.e., the transversal Milnor fiber of $\of-b$ at this point). This concludes the proof.
\end{proof}

\begin{remark}\label{tmn}
As in \cite{MPT2}, we call $\mu^\pitchfork_{s_b,i}$ the {\it transversal Milnor number} of $\of-b$ along $\Sigma_b^{s_b,i}$, and
note that by transversality one has that $\mu^\pitchfork_{s_b,i}=\dim \widetilde{H}^{n-s_b}(MF_{s_b,i};\bQ)$,
with $MF_{s_b,i}$ the Milnor fiber of $\of-b$ at a point in $\Sigma_b^{s_b,i}$. \hfill\qed
\end{remark}

For any $i$, denote by
\be\label{vm}
 A_b^i:\pi_1(\Sigma_b^{s_b,i}) \lra {\rm Aut}\big({L}_b^{s_b,i}\big),
 \ee
the action of the fundamental group $\pi_1(\Sigma_b^{s_b,i})$ of the stratum $\Sigma_b^{s_b,i}$ on the stalk ${L}_b^{s_b,i}$ of the local system $\mathcal{L}_b^{s_b,i}$ of Lemma \ref{identif3}, and refer to $A_b^i$ as the {\it local system monodromy} along the stratum $\Sigma_b^{s_b,i}$.

With the above notations and under the hypotheses of Theorem \ref{t14a}, we deduce the following.

\bt\label{c14a}
For $b \in B$ a bifurcation value of $f$ with $s_b>0$, there is a monomorphism  \footnote{Here, the $A_i$-invariant part of ${L}_b^{s_b,i}$ is the intersection of the invariant subspaces over some set of generators of $\pi_1(\Sigma_b^{s_b,i})$.}
\be\label{upb}
V^{n-s_b}(b) \hookrightarrow \bigoplus_i \left({L}_b^{s_b,i}\right)^{A_b^i}.
\ee
In particular, 
 \be\label{2.18}
\dim V^{n-s_b}(b) \leq \sum_i \dim \left({L}_b^{s_b,i}\right)^{A_b^i} \leq \sum_i  \mu^\pitchfork_{s_b,i}.
 \ee
 
  If, moreover, $s_b \geq 2$ and $\Sigma_b$ does not contain any strata of dimension $s_b-1$, then \eqref{upb} is an isomorphism and  the first inequality in \eqref{2.18} becomes an equality. 
\et

\begin{proof}
First note that \eqref{newthad} yields a monomorphism
\be\label{monob}
V^{n-s_b}(b) \hookrightarrow \bH^{-s_b}(\Sigma_b^{s_b};\cP_b^{s_b}).
\ee
The hypercohomology spectral sequence together with the support condition for perverse sheaves then yield in the notations of Lemma \ref{identif3} that (e.g., see \cite[Proposition 5.2.20]{Di})
\be\label{monobb} 
\begin{split}
\bH^{-s_b}(\Sigma_b^{s_b};\cP_b^{s_b}) &\cong H^0(\Sigma_b^{s_b};\cH^{-s_b}(\cP_b^{s_b}))
\cong \bigoplus_i H^{0}(\Sigma_b^{s_b,i};\cH^{-s_b}(\cP_b^{s_b,i})) \\
& \cong \bigoplus_i H^{0}(\Sigma_b^{s_b,i};\mathcal{L}_b^{s_b,i}) \cong \bigoplus_i \left({L}_b^{s_b,i}\right)^{A_b^i}.
\end{split}
\ee
Altogether, this proves \eqref{upb}, while \eqref{2.18} follows by computing dimensions in \eqref{upb}.

Let us next assume that $s_b\geq 2$. By setting $j=0$ in \eqref{newthad2} we get an isomorphism
\be\label{usm1}
V^{n-s_b}(b) \cong \bH^{-s_b}(U_{b}^{s_b-1};\cP_b^{s_b-1}).
\ee
Consider the inclusions
$$\Sigma_b^{s_b-1} \overset{\alpha}{\hookrightarrow} U_b^{s_b-1} \overset{\beta}{\hookleftarrow} U_b^{s_b}=\Sigma_b^{s_b}$$
(or, in the notations of Theorem \ref{t14a}, $\alpha=v_{s_b-1}$ and $\beta=u_{s_b-1}$)
and the corresponding attaching triangle 
\be\label{stls1}{\alpha}_!{\alpha}^!\cP_b^{s_b-1} \to \cP_b^{s_b-1} \to R{\beta}_*{\beta}^*\cP_b^{s_b-1} \overset{[1]}{\to}\ee
with ${\alpha}_!={\alpha}_*$ and ${\beta}^*\cP_b^{s_b-1} \cong \cP_b^{s_b}$. In view of \eqref{usm1}, the hypercohomology long exact sequence associated to \eqref{stls1} contains the terms
\be\label{stl2s} \cdots \to \bH^{-s_b}(\Sigma_b^{s_b-1}; {\alpha}^!\cP_b^{s_b-1}) \to V^{n-s_b}(b)  \to   \bH^{-s_b}(U_b^{s_b};\cP_b^{s_b}) \to \bH^{-s_b+1}(\Sigma_b^{s_b-1}; {\alpha}^!\cP_b^{s_-1}) \to \cdots\ee
If $\Sigma_b$ does not contain any strata of dimension $s_b-1$, then $\Sigma_b^{s_b-1}=\emptyset$, so \eqref{stl2s} shows that in this case \eqref{monob} becomes an isomorphism. The remaining assertions follow now immediately from \eqref{monobb}.
\end{proof}

%%%%%%%%%%%%%%%%

\section{On the cohomology of the general fiber}\label{genf}

Recall  from the previous section that for $b \in B$ a bifurcation value of $f$ we denote
\[\Sigma_b:={\rm Supp} \ \varphi_{\overline{f}-b}Rj_*\bQ_X,\] and $s_b:=\dim \Sigma_b$.
Moreover, we let 
\begin{center} $\Sigma^a_b:=\Sigma_b \cap X=\Sing(f) \cap F_b$ and $\Sigma_b^\infty:=\Sigma_b \cap \oX_\infty$,\end{center} so that $\Sigma_b=\Sigma^a_b \cup \Sigma_b^\infty$.

Let us introduce the notation \be\label{sdef} s:=s(\of):=\max_{b\in B} s_b.\ee We assume $s \geq 0$ and we say that ``$f$ has $s$-dimensional singularities, including at infinity''.  
Note that, if we choose a Whitney stratification $\cW$ of $\oX$ with respect to which $Rj_*\bQ_X$ is constructible and let $s(\of,\cW):=\dim \ \Sing_{\cW}(\of)$, then \eqref{sup} yields that $s \leq s(\of,\cW)$.

Let us also set
\[B_s:=\{b \in B \mid s_b=s\}.\]
In view of the isomorphism \eqref{iso-van}, Corollary \ref{cor32} yields the following well-known vanishing result (\cite{NN,ST,Bre,Di}), together with a description of the first (possibly) non-vanishing cohomology group of the general fiber of $f$:
\bc\label{c26}
If $F$ denotes the general fiber of the polynomial map $f\colon \bC^{n+1}\to \bC$, then
$\widetilde{H}^k(F;\bQ)\cong 0$ for all integers $k \notin [n-s,n]$. Moreover,
\be\label{nmsgen}
\widetilde{H}^{n-s}(F;\bQ)\cong \bigoplus_{b \in B_s} V^{n-s}(b).
\ee
\ec

Let us also mention here the special case of ``tame'' polynomials, considered in \cite{Sab,Pa2,Di}. Recall that $f\colon \bC^{n+1}\to \bC$ is called (cohomologically) {\it tame} with respect to the compactification $\of$ if the complex $Rj_*\bQ_X$ has no vanishing cycles at infinity, that is, for any $b \in B$ the complex $\varphi_{\overline{f}-b}Rj_*\bQ_X$ is supported on $\Sigma^a_b$. The following result is well-known:
\bc\label{coro32}
If $f\colon \bC^{n+1}\to \bC$ is a tame polynomial with respect to $\of$, then $\widetilde{H}^k(F;\bQ)=0$ for all $k \neq n$.
\ec

\begin{proof}
In view of  \eqref{iso-van}, it suffices to show that $V^k(b)=0$ for all $k \neq n$ and $b \in B$. 

Let $j_b:F_b\hookrightarrow \oF_b$ be the inclusion map. The tameness assumption implies via the obvious attaching triangle that
\[
\varphi_{\overline{f}-b}Rj_*\bQ_X \cong {j_b}_! j_b^* \varphi_{\overline{f}-b}Rj_*\bQ_X \cong {j_b}_!  \varphi_{{f}-b}j^*Rj_*\bQ_X \cong {j_b}_!  \varphi_{{f}-b}\bQ_X.
\]
Hence, using \eqref{vanb},
\[ \begin{split}
V^k(b)\cong \bH^k(\overline{F}_b; \varphi_{\overline{f}-b}Rj_*\bQ_X) & \cong
\bH^k(\overline{F}_b; {j_b}_!  \varphi_{{f}-b}\bQ_X) \cong \bH_c^k(F_b;   \varphi_{{f}-b}\bQ_X)\\ & \cong \bH_c^{k-n}(F_b;   \varphi_{{f}-b}\bQ_X[n]).
\end{split}
\]
Since $ \varphi_{{f}-b}\bQ_X[n]$ is a perverse sheaf on the affine variety $F_b$, Artin's vanishing theorem (e.g., see \cite[Thm.10.3.59]{MS}) yields that the last group of the above sequence of isomorphisms vanishes in negative degrees, i.e., for $k<n$.
\end{proof}

\br\label{rem33}
In fact, the tameness assumption for $f$ implies immediately that $s=0$, i.e.,  $f$ has only isolated singularities on $\bC^{n+1}$, see, e.g., \cite[Thm.6.3.17]{Di}. $\hfill\qed$
\er

In this paper, we are mainly interested in the first (possibly) non-vanishing cohomology group $H^{n-s}(F;\bQ)$ and the corresponding Betti number. The case $s=0$ was already considered in several references, including \cite{ST,Hamm,Di}. The relevant statement follows immediately from \eqref{iso-van} and \eqref{e23}, together with the identifications from \eqref{mfa} and \eqref{mfi}. In our notations, it can be phrased as follows.

\bc\label{geniso}
Let $F$ denote the general fiber of the polynomial map $f:\bC^{n+1}\to \bC$ with only isolated singularities, including at infinity (that is, $s=0$). With the above notations, 
\be\label{genisob}
b_n(F)=\sum_{b \in B} \sum_{p\in \Sigma_b} \mu_{\of-b,p},
\ee
where $\mu_{\of-b,p}$ is the Milnor number of $\of-b$ at the point $p \in \Sigma_b$. $\hfill\qed$
\ec

In this section we assume that there exists a bifurcation value $b \in B$ with $s_b>0$, hence also $s>0$.

By combining Corollary \ref{c26} with Theorem \ref{c14a}, and in the notations of the latter, we get the following upper bound on the first (possibly) non-vanishing Betti number of the general fiber of $f$:
\bc\label{nmsb}
Let $F$ denote the general fiber of the polynomial map $f\colon\bC^{n+1}\to \bC$ with $s>0$. Then 
\be\label{nmsgen2}
\widetilde{H}^{n-s}(F;\bQ)\hookrightarrow \bigoplus_{b \in B_s}  \bigoplus_i \left({L}_b^{s,i}\right)^{A_b^i}.
\ee
In particular, 
 \be\label{nmsgb}
b_{n-s}(F) \leq  \sum_{b \in B_s} \sum_i  \mu^\pitchfork_{b,s,i},
 \ee
with $\mu^\pitchfork_{b,s,i}$ denoting as before the transversal Milnor  number of $\of-b$ at an $s$-dimensional stratum in the atypical fiber $\oF_b$, for some $b \in B_s$, and $i$ indexing the connected $s$-dimensional strata in $\Sigma_b$.

 If, moreover, $s\geq 2$ and for every $b \in B_s$ the set $\Sigma_b$ has no strata of dimension $s-1$, then \eqref{nmsgen2} becomes an isomorphism. $\hfill\qed$
\ec

\begin{remark}[Transversal Milnor numbers and Milnor-number jumps at infinity]\label{calcmu}
Let $b\in B_s$, and let $\Sigma_b^{s,i}\subset\Sigma_b^\infty$ be an $s$-dimensional stratum appearing in Corollary~\ref{nmsb}. Choose a general point $p\in\Sigma_b^{s,i}$ and consider the transverse graph $\overline{g}:\overline{Y}\to\mathbb C$ constructed in the proof of Lemma~\ref{identif3}. Let $j_E:Y\hookrightarrow\overline{Y}$ be the inclusion, $\overline{G}_t=\bar g^{-1}(t)$ be the fiber of $\overline g$ over $t \in \C$, and set $m=n-s$. Let $MF_{b,s,i}^{\pitchfork}$ denote the corresponding transversal Milnor fiber, i.e., the Milnor fiber of the projection $\overline{g}-b$ on the transverse graph at $p$. The contribution of this stratum to the right-hand side of the bound in Corollary~\ref{nmsb} is the Milnor-L\^e number 
\[
 \mu_{b,s,i}^{\pitchfork}
 =\dim\widetilde H^{m}(MF_{b,s,i}^{\pitchfork};\mathbb Q).
\]
%where $MF_{b,s,i}^{\pitchfork}$ is the Milnor fiber of the projection $\bar g-b$ on the transverse graph. 
The Milnor fiber $MF_{b,s,i}^{\pitchfork}$ is represented by $\overline{G}_c\cap B_\varepsilon(p)$ for $0<|c-b|\ll\varepsilon$.
Since the transverse graph may be singular, this nearby fiber need
not be a (full) smoothing of the hypersurface germ $(\overline{G}_b,p)$.
Assume, moreover, that the germs $(\overline{G}_b,p)$ and $(\overline{G}_c,p)$ have at most isolated hypersurface singularities in the transverse ambient space, and that $p$ is the
only singular point of $\overline{G}_c\cap B_\varepsilon(p)$. Denote their usual hypersurface Milnor numbers by
\[
 \mu_p(b):=\mu(\overline{G}_b,p),\qquad
 \mu_p^{gen}:=\mu(\overline{G}_c,p),\qquad 0<|c-b|\ll1.
\]
Then
\begin{equation}\label{MilLe}
\mu_{b,s,i}^{\pitchfork}
       =\mu_p(b)-\mu_p^{gen}
\end{equation}
(see \cite[Example~6.2.20]{Di} or \cite[formula (2.2)]{ST0}). The term
$\mu_p^{gen}$ is a measure of the singularity in the nearby compactified transverse fiber.

Even though formula \eqref{MilLe} is well-known, for the benefit of the reader we include below an explanation for its derivation. The transverse choice and formula~\eqref{1000} make $p$ an isolated point of the support of the transverse vanishing cycle complex. Applying the local argument of Lemma~\ref{identif} to $\overline{g}$ gives
\[
 \left(
 \cH^m(\varphi_{\overline g-b}Rj_{E*}\mathbb Q_Y)_p
 \right)^\vee
 \cong
 \cH^m(\varphi_{\bar g-b}\mathbb Q_{\bar Y})_p
 \cong
 \widetilde H^m(MF_{b,s,i}^{\pitchfork};\mathbb Q).
\]
This is (a mild correction of) the comparison of stalk dimensions used in \cite[Example~6.2.20]{Di}. To obtain the Milnor-number jump from it, let $\overline G_t=\{h_t=0\}$ in local coordinates near $p$, and set $M_c:=\{h_c=0\}\cap B_\varepsilon(p)$. For sufficiently small generic $\delta\ne0$, the smooth space
\[
 N_{c,\delta}:=\{h_c=\delta\}\cap B_\varepsilon(p)
\]
obtained by smoothing the singularity of $M_c$ at $p$ has the
Milnor-fiber topology of the hypersurface germ $(\overline G_b,p)$.
Hence \[\chi(N_{c,\delta})=1+(-1)^m\mu_p(b).\] 
On the other hand, smoothing the singularity at $p$ of $M_c$  changes the 
Euler characteristic by $(-1)^m\mu_p^{gen}$. Indeed, the singular germ in a smaller Milnor ball has Euler characteristic $1$, whereas its smoothing has Euler characteristic
$1+(-1)^m\mu_p^{gen}$. Outside that smaller ball the spaces are identified by local triviality.
Therefore
\[
 \chi(N_{c,\delta})
 =\chi(M_c)+(-1)^m\mu_p^{gen},
\]
which implies
\[
 \widetilde\chi(M_c)
 =(-1)^m\bigl(\mu_p(b)-\mu_p^{gen}\bigr).
\]
By the isolated-support and duality argument in Lemma~\ref{identif}, applied to $\overline g$, the reduced cohomology of $M_c$ is concentrated in degree $m$. Hence
\[
 \mu_{b,s,i}^{\pitchfork}
 =\dim\widetilde H^m(M_c;\mathbb Q)
 =\mu_p(b)-\mu_p^{gen},
\]
as claimed.

More generally, if the nearby hypersurface has several isolated singular points $p_i$ in the chosen Milnor ball, the same calculation replaces $\mu_p^{gen}$ by $\sum_i\mu(h_c,p_i)$. The additional isolated hypersurface singularity hypotheses above are needed to express the answer by ordinary Milnor numbers, and they are not additional assumptions on Corollary~\ref{nmsb} itself. $\hfill\qed$
\end{remark}

\br\label{whp2}
If $f$ is a homogeneous polynomial of degree $d \geq 2$, then $c=0$ is its only atypical value, and the general fiber $F$ coincides with the Milnor fiber of $f$ (while $F_0$ is contractible). Moreover, since $\oF_0$ intersects the hyperplane at infinity $H_\infty$ transversally (in a stratified sense), all top dimensional strata appearing in the statement of Corollary \ref{nmsb} arise from the affine part. Hence, in this setting, Corollary \ref{nmsb} agrees with the corresponding bounds for the first nonvanishing Betti number of Milnor fibers established in \cite[Thm.3.4(a)]{MPT2}.  $\hfill\qed$
\er

\br\label{whp3}
Assume that $f\colon\bC^{n+1}\to \bC$ is weighted homogeneous of weighted degree $D$ with respect to positive integral weights $\mathbf w=(w_1,\ldots,w_{n+1})$, and that $s:=\dim_{\mathbb C}\Sing(f)>0$. Then, as already noted in Example~\ref{whp}, $0$ is the only atypical value, the general fiber $F$ is the Milnor fiber of the germ $(f,0)$, and $F_0=f^{-1}(0)$ is contractible.
Consider the weighted graph compactification
\[
\overline X_{\mathbf w}
=
\left\{
([x_0:x_1:\cdots:x_{n+1}],t)
\in\mathbb P(1,w_1,\ldots,w_{n+1})\times\mathbb C
\;\middle|\;
f(x_1,\ldots,x_{n+1})=t x_0^D
\right\}.
\]
Let \(\overline f_{\mathbf w}\) be projection onto the second factor,
let $j:X=\{x_0\neq0\}\hookrightarrow\overline X_{\mathbf w}$, and put
\[
\Sigma_{0,\mathbf w}
:=
\operatorname{Supp}
\varphi_{\overline f_{\mathbf w}}
Rj_*\mathbb Q_X
\quad \text{and} \quad 
\Sigma^\infty_{0,\mathbf w}
:=
\Sigma_{0,\mathbf w}\cap\{x_0=0\}.
\]
Note that (by the weighted Euler identity) $\Sing(f) \subset F_0$. Moreover, one also has that \[
\Sigma^\infty_{0,\mathbf w}
\subset
\mathbb P_{\mathbf w}(S)
:=
(S\setminus\{0\})/\mathbb C^*, 
\]
and therefore $\dim_{\mathbb C}\Sigma^\infty_{0,\mathbf w} \leq s-1<s$ when $s>0$, with $\Sigma^\infty_{0,\mathbf w}=\emptyset$ if $s=0$.

As already indicated at the end of Introduction, a formula similar to \eqref{nmsgen2} can be obtained by replicating the proof of Corollary~\ref{nmsb} in the context of a partial compactification $\oX$ of $\bC^{n+1}\cong X$ endowed with a proper extension of $f$, as long as the inclusion $j:X \hookrightarrow \oX$ is affine. A priori, the only thing that may differ in its formulation is the stalks ${L}_b^{s,i}$ of the local systems from Lemma \ref{identif3} along the $s$-dimensional strata in the infinity part of $\Sigma_b$, for some $b \in B_s$.
However, as already pointed out above, in the setting of our weighted graph compactification, all strata in $\Sigma^\infty_{0,\mathbf w}$ have dimension $<s$, while 
\[
\Sigma^a_{0,\mathbf w}=\operatorname{Sing}(f)
\]
has dimension $s$. Therefore every $s$-dimensional stratum
contributing to formula \eqref{nmsgen2} adapted to our compactification lies in the affine part. Therefore, Corollary~~\ref{nmsb} gives
\begin{equation}\label{lastf}
\widetilde H^{\,n-s}(F;\mathbb Q)
\hookrightarrow
\bigoplus_i
\widetilde H^{\,n-s}
   (MF^\pitchfork_{s,i};\mathbb Q)^{A_i},
\end{equation}
and, in particular,
\[
b_{n-s}(F)
\leq
\sum_i\mu^\pitchfork_{s,i},
\]
which is the corresponding bound of \cite[Thm.3.4(a)]{MPT2}. $\hfill\qed$
\er

%%%%%%%%%%%%%%%%%%%%

\section{On the cohomology of atypical fibers}\label{atyp}
In this section, we describe vanishing results for the cohomology of an atypical fiber $F_b$ of $f$, for $b \in B$, as well as an upper bound on the first (possibly) non-vanishing Betti number of $F_b$, with no restriction on the dimension of the singularities. Some of these results are already contained in \cite[Sections 6.2, 6.3]{Di} in the case of isolated singularities, including at infinity, and the vanishing results (Theorem \ref{tat} below) appear in an even more general setting in \cite{Bre}. %We generalize the result on the bound to non-isolated singularities. 
We continue using  the notations from Section \ref{genf}.

We begin with the following result, proved in \cite[Thm.3.1]{Bre} (see also \cite[Thm.6.3.23(iii)]{Di} for the case $s=0$). We include its proof here for  completeness, as it also sets the stage for further considerations in this section.  

\bt\label{tat}
Assume that the complex polynomial $f\colon \bC^{n+1} \to \bC$ has $s$-dimensional singularities, including at infinity. Let $b \in B$ be a bifurcation value of $f$, with corresponding atypical fiber $F_b=f^{-1}(b)$. Then 
\begin{center}
$\wti{H}^k(F_b;\bQ)=0$ \ \ for all \ $k < n-s-1$.
\end{center}
\et

\begin{proof} As before, we denote by $X \cong \C^{n+1}$ the graph of $f$, and $\oX$ its closure in $\C P^{n+1} \times \C$. 
Recall that $s_b:=\dim \Sigma_b$, where $\Sigma_b={\rm Supp} \ \varphi_{\overline{f}-b}Rj_*\bQ_X$, and $s=\max_{b\in B} s_b$. We set
\[F_b^\infty:=\oF_b \cap \oX_\infty,\]
and consider the following commutative diagram of inclusions:
\[
\xymatrix{
F_b \ar[d]_{k_b}\ar[r]^{j_b}& \oF_b \ar[d]^{\ok_b} & \ar[d]^{k^\infty_b}\ar[l]_{i_b} F_b^\infty \\
X  \ar[r]_{j}& \oX & \ar[l]^{i} \oX_\infty,
}
\]
%When working with vanishing cycles with support on $\Sigma_b$, we will  use the same symbols for the inclusions $j_b: \Sigma^a_b:=\Sigma_b \cap X \hookrightarrow \Sigma_b$ and $i_b: \Sigma_b^\infty:=\Sigma_b \cap \oX_\infty \hookrightarrow \Sigma_b$.

Let $T(F_b)$ denote as before a small tube around $F_b$. Note that  \eqref{ses} and Corollary \ref{c26} yield that 
\be\label{tvan}
\wti{H}^k(T(F_b);\bQ)=0, \ \ {\rm for \ all} \ k<n-s.
\ee
From the (reduced) cohomology long exact sequence for the pair $(T(F_b),F_b)$, we see that in order to prove the assertion in the theorem it suffices to show that \be\label{want} H^k(T(F_b),F_b;\bQ)=0, \ \  {\rm for \ all} \ k<n-s.\ee

The cohomology of $T(F_b)$ is computed by
\[
{H}^k(T(F_b);\bQ)\cong \cH^k(Rf_*\bQ_X)_b= \cH^k(R\of_*Rj_*\bQ_X)_b\cong \bH^k(\oF_b;\ok_b^*Rj_*\bQ_X),
\]
where the last isomorphism uses the properness of $\of$. Moreover,
\[
H^k(F_b;\bQ)\cong \bH^k(F_b;k_b^*\bQ_X)\cong \bH^k(\oF_b;R{j_b}_*k_b^*\bQ_X) \cong \bH^k(\oF_b;R{j_b}_*j_b^*\ok_b^*Rj_*\bQ_X),
\]
where the last isomorphism uses the identification $j_b^*\ok_b^*Rj_*=k_b^*j^*Rj_*=k_b^*$. So the cohomology restriction homomorphism ${H}^k(T(F_b);\bQ) \to H^k(F_b;\bQ)$ is induced by the adjunction morphism
\[ \cF \lra R{j_b}_*j_b^*\cF,\]
where $\cF:=\ok_b^*Rj_*\bQ_X$, and the attaching triangle
\[ {i_b}_! i_b^! \lra id \lra R{j_b}_*j_b^* \lra \]
yields that
\be\label{e42}
H^k(T(F_b),F_b;\bQ) \cong \bH^k(\oF_b; {i_b}_! i_b^! \cF) \cong \bH^k(F_b^\infty; i_b^! \cF).
\ee

Next, apply the functor $i_b^!$ to  the distinguished triangle
\be\label{var}
\ok_b^!Rj_*\bQ_X[n+1] \lra {^p\varphi}_{\of-b}Rj_*\bQ_X[n+1] \overset{var}{\lra} 
{^p\psi}_{\of-b}Rj_*\bQ_X[n+1] \overset{[1]}{\lra}
\ee
where ${^p\varphi}:=\varphi[-1]$ and ${^p\psi}:=\psi[-1]$ are the perverse vanishing and nearby functors, and $var$ is the variation morphism. Note that 
$ i_b^! \ok_b^!={k_b^\infty}^! i^!,$
hence \[ i_b^! \ok_b^! Rj_*={k_b^\infty}^! i^! Rj_* =0, \] since $i^! Rj_* =0$.
It follows that one has an isomorphism
\be\label{variso} var: i_b^! {^p\varphi}_{\of-b}Rj_*\bQ_X[n+1] \overset{\cong}{\lra} i_b^! {^p\psi}_{\of-b}Rj_*\bQ_X[n+1]. \ee

Applying now the functor $i_b^!$ to the distinguished triangle
\be\label{can}
{^p\psi}_{\of-b}Rj_*\bQ_X[n+1]  \overset{can }{\lra} {^p\varphi}_{\of-b}Rj_*\bQ_X[n+1]
 \lra \ok_b^*Rj_*\bQ_X[n+1] \overset{[1]}{\lra}
\ee
and using \eqref{variso} together with the fact that $can \circ var=M_b -id$, where $M_b$ is the monodromy automorphism of ${^p\varphi}_{\of-b}Rj_*\bQ_X[n+1]$, we get a triangle
\be\label{ibcan}
i_b^!{^p\varphi}_{\of-b}Rj_*\bQ_X[n+1]  \overset{M_b-id}{\lra} i_b^!{^p\varphi}_{\of-b}Rj_*\bQ_X[n+1]
 \lra i_b^!\cF[n+1] \overset{[1]}{\lra}
\ee
To simplify the writing, in what follows we let
\[ \cG:={^p\varphi}_{\of-b}Rj_*\bQ_X[n+1] ,\]
which is a perverse sheaf on $\oF_b$ with $s_b$-dimensional support $\Sigma_b$.
The cohomology long exact sequence associated to \eqref{ibcan} reads as
\be\label{s47}
\cdots \to \bH^k(F_b^\infty; i_b^! \cG) \overset{M_b-id}{\to} \bH^k(F_b^\infty; i_b^! \cG) \to \bH^{k+n+1}(F_b^\infty; i_b^! \cF) \to \bH^{k+1}(F_b^\infty; i_b^! \cG) \to \cdots
\ee

To show the vanishing \eqref{want} of $H^*(T(F_b),F_b;\bQ)\cong \bH^{*}(F_b^\infty; i_b^! \cF)$ in the desired range, we consider the distinguished triangle
\be\label{ge}
{i_b}_!i_b^! \cG \lra \cG \lra R{j_b}_* j_b^* \cG \lra
\ee
and its associated hypercohomology long exact sequence. 
%Since  $j_b$ is a quasi-finite affine morphism, the functor $R{j_b}_*$ is t-exact. 
Since $j_b^*$ is t-exact and $\cG$ is perverse, it follows that $j_b^* \cG$ is perverse on $F_b$. Since the support of this perverse sheaf has dimension $\leq s_b\leq s$, the hypercohomology spectral sequence yields that 
\begin{center} $\bH^k(\oF_b; R{j_b}_* j_b^* \cG)\cong \bH^k(F_b;  j_b^* \cG)=0$ for all $k<-s$.\end{center} 
Moreover, by Corollary \ref{cor32}, $\bH^k(\oF_b; \cG)=V^{k+n}(b)=0$  for $k<-s_b$, hence it vanishes also for $k<-s$. Altogether, the hypercohomology long exact sequence for \eqref{ge} implies that 
\be\label{e44} \bH^k(F_b^\infty; i_b^! \cG)=0, \ \  {\rm for \ all} \ k<-s.\ee
Back in \eqref{s47}, this further gives
\begin{center} $\bH^k(F_b^\infty; i_b^! \cF)=0$, \ \  for all \ $k<n-s$.\end{center}
In view of \eqref{e42}, this proves the desired vanishing of \eqref{want}, thus completing the proof of the theorem.
\end{proof}

\br With $\Sigma_b$ the support of $\cG:={^p\varphi}_{\of-b}Rj_*\bQ_X[n+1]$, and $\Sigma^a_b:=\Sigma_b \cap X$ and $\Sigma_b^\infty:=\Sigma_b \cap \oX_\infty $ as before, 
consider the following commutative diagram of inclusions:
\[
\xymatrix{
F_b \ar[r]^{j_b}& \oF_b  & \ar[l]_{i_b} F_b^\infty \\
\Sigma_b^a \ar[u]^{u^a_b} \ar[r]_{\sigma_b^a}& \Sigma_b \ar[u]^{u_b} & \ar[u]_{u^\infty_b} \ar[l]^{\sigma_b^\infty} \Sigma_b^\infty.
}
\]
%When working with vanishing cycles with support on $\Sigma_b$, we will  use the same symbols for the inclusions $j_b: \Sigma^a_b:=\Sigma_b \cap X \hookrightarrow \Sigma_b$ and $i_b: \Sigma_b^\infty:=\Sigma_b \cap \oX_\infty \hookrightarrow \Sigma_b$.
Since $\cG$ is supported on $\Sigma_b$, we have that $u_b^*\cG\cong u_b^!\cG$ (e.g., see \cite[Cor.8.2.10]{Max}). Hence \[ \cG \cong {u_b}_! u_b^!\cG \cong {u_b}_* u_b^* \cG.\]
Using proper base change and the above diagram, we have:
\be\label{bcs}
\begin{split}
\bH^k(F_b^\infty; i_b^! \cG) &\cong \bH^k(F_b^\infty; i_b^! {u_b}_* u_b^* \cG) \cong \bH^k(F_b^\infty;  {u^\infty_b}_* (\sigma_b^\infty)^!u_b^! \cG) 
\cong \bH^k(\Sigma_b^\infty;  (\sigma_b^\infty)^!u_b^! \cG) \\
&\cong  \bH^k(\Sigma_b^\infty;  (u_b^\infty)^!i_b^! \cG).
\end{split}
\ee
Similarly, one obtains:
\[
\bH^k(\oF_b; R{j_b}_* j_b^* \cG)\cong \bH^k(F_b; j_b^* \cG) \cong \bH^k(\Sigma^a_b; (u^a_b)^*j_b^* \cG).
\] $\hfill\qed$
\er

A useful consequence of the proof of the above Theorem \ref{tat} is the following.
\bp\label{pr43}
With the above notations, we have
\be\label{cons} \wti{H}^{n-s-1}(F_b;\bQ) \hookrightarrow \bH^{n-s}(F_b^\infty; i_b^! \cF) \cong \ker\left(\bH^{-s}(F_b^\infty; i_b^! \cG) \overset{M_b-id}{\lra} \bH^{-s}(F_b^\infty; i_b^! \cG)\right),
\ee
with $\cF:=\ok_b^*Rj_*\bQ_X$ and  $\cG={^p\varphi}_{\of-b}Rj_*\bQ_X[n+1]$.
\ep 
\begin{proof} The  inclusion in \eqref{cons} follows from \eqref{tvan}, \eqref{want}, the long exact sequence for the reduced cohomology of the pair $(T(F_b), F_b)$, together with \eqref{e42}. The isomorphism in \eqref{cons} follows from \eqref{s47} and \eqref{e44}, where we also denote by $M_b$ the automorphism induced on $\bH^{-s}(F_b^\infty; i_b^! \cG)$ from the monodromy of $\cG$. 
\end{proof}

\br
If $s=0$ and $\Sigma_b^\infty:=\Sigma_b \cap \oX_\infty\neq \emptyset$, then \eqref{cons} yields the upper bound on $\dim \wti{H}^{n-1}(F_b;\bQ)$ obtained (by the exact same method) in \cite[Thm.6.3.23]{Di}. We aim to generalize this fact below, to obtain a bound on $\dim \wti{H}^{n-s-1}(F_b;\bQ)$ (i.e., the first possibly non-vanishing Betti number of $F_b$) for an arbitrary non-negative integer $s$. $\hfill\qed$
\er

\br[Tame polynomials] If $f\colon \bC^{n+1}\to \bC$ is a tame polynomial then, as seen in Remark \ref{rem33}, we have $s=0$ and $\Sigma_b^\infty= \emptyset$.
In view of \eqref{bcs}, this further implies via \eqref{s47} and \eqref{e42} that $H^k(T(F_b),F_b;\bQ)=0$ for all integers $k$. The cohomology long exact sequence of the pair then implies that one has an isomorphism
\[ H^k(T(F_b);\bQ) \cong H^k(F_b;\bQ) \]
for every integer $k$; compare also with \cite[Thm.6.3.17(iii)]{Di}.  $\hfill\qed$
\er

Without any loss of generality, we can assume that $\Sigma_b^\infty$ is a union of strata of $\Sigma_b$. Then the following holds.
\bt
\label{ibinc}
Assume that the complex polynomial $f\colon \bC^{n+1} \to \bC$ has $s$-dimensional singularities, including at infinity. Let $b \in B$ be a bifurcation value of $f$, with corresponding atypical fiber $F_b=f^{-1}(b)$. 
With the notations used in this section, let us further denote by $\delta_s:\Sigma_b^{\infty,s} \hookrightarrow \Sigma_b^\infty$ the inclusion of the union of (top) $s$-dimensional strata contained in $\Sigma_b^\infty$. Then we have an $M_b$-equivariant inclusion
\be\label{inclb}
\bH^{-s}(F_b^\infty; i_b^! \cG) \hookrightarrow \bH^{-s}(\Sigma_b^{\infty,s}; \delta_s^! (u_b^\infty)^!i_b^! \cG) \cong H^0(\Sigma_b^{\infty,s}; \cH^{-s}(\delta_s^! (u_b^\infty)^!i_b^! \cG)).
\ee
\et

\begin{proof}
Recall here the relevant inclusions
\[
\xymatrix{
 \oF_b  & \ar[l]_{i_b} F_b^\infty & \\
 \Sigma_b \ar[u]^{u_b} & \ar[u]_{u^\infty_b} \ar[l]^{\sigma_b^\infty} \Sigma_b^\infty  & \ar[l]^{\delta_s} \Sigma_b^{\infty,s}.
}
\]
%\[ \Sigma_b^{\infty,s} \overset{\delta}{\hookrightarrow} \Sigma_b^\infty \overset{u_b^\infty}{\hookrightarrow} F_b^\infty \overset{i_b}{\hookrightarrow} \oF_b \]

If $s=0$, the statement follows from \eqref{bcs}, since $\delta_s$ is in this case the identity map, and \eqref{inclb} is in fact an isomorphism.

Let us now assume that $s\geq 1$, and recall from \eqref{bcs} that 
$$
\bH^{-s}(F_b^\infty; i_b^! \cG) 
 \cong  \bH^{-s}(\Sigma_b^\infty;  (u_b^\infty)^!i_b^! \cG)
 \cong \bH^{-s}(\Sigma_b^\infty;  (\sigma_b^\infty)^!u_b^! \cG),
$$
where in the notations of \eqref{pp} we have that 
\[ u_b^! \cG \cong u_b^* \cG =\cP_b,\]
which is a perverse sheaf on $\Sigma_b$. The proof of the inclusion part of \eqref{inclb}  follows now as in the proof of Theorem \ref{tat}, using attaching triangles for strata in $\Sigma_b^\infty$, the hypercohomology spectral sequence, together with the costalk condition for the perverse sheaf $\cP_b$ applied to strata contained in $\Sigma_b^\infty$ (regarded as strata in $\Sigma_b$). The isomorphism part of \eqref{inclb} follows by using the hypercohomology spectral sequence associated to $\bH^{-s}(\Sigma_b^{\infty,s}; \delta_s^! (u_b^\infty)^!i_b^! \cG)$, using the costalk condition for $\cP_b$ applied to $s$-dimensional strata.  We leave the details to the reader.
\end{proof}

As a consequence of Theorems \ref{tat} and \ref{ibinc}, we have the following.
\bc\label{cor43}
If, under the assumptions of Theorem \ref{tat}, we assume, moreover, that 
\[\dim(\Sigma_b^\infty) <s,\] 
e.g., if $s_b=\dim \Sigma_b <s$, then 
\[  \wti{H}^{n-s-1}(F_b;\bQ)=0.\]
\ec

\begin{proof}
%If $s=0$, the assertion follows immediately from \eqref{cons} in view of \eqref{bcs}, since $\Sigma_b^\infty$ is in this case the empty set. If $s>0$, t
The  assertion follows immediately from \eqref{cons} and \eqref{inclb}, since $\Sigma_b^{\infty,s}=\emptyset$ by assumption.
\end{proof}

Another consequence of Theorem \ref{ibinc} is obtaining an upper bound on the Betti number $\dim  \wti{H}^{n-s-1}(F_b;\bQ)$. Before stating the result, let us introduce some notation. 

For an $s$-dimensional connected stratum $S$ in $\Sigma_b^{\infty}$ (and hence also a top stratum in $\Sigma_b$), let $MF^\pitchfork_{S}$ denote the (transversal) Milnor fiber of $\of-b$ at some point in $S$. Its reduced cohomology is concentrated in degree $n-s$. Moreover, the proof of Lemma \ref{identif3} together with \eqref{mfi} show that $\wti{H}^{n-s}(MF^\pitchfork_{S};\bQ)^\vee$ is isomorphic to the stalk $L_S$ of the local system $\cL_S:=\cH^{-s}(\cP_b)\vert_S$ on $S$ with corresponding local system monodromy
$A_S:\pi_1(S) \to {\rm Aut}\left(L_S\right)$.
In these notations, we have the following.
\bt\label{topbb}
Assume that the complex polynomial $f\colon \bC^{n+1} \to \bC$ has $s$-dimensional singularities, including at infinity. Let $b \in B$ be a bifurcation value of $f$, with corresponding atypical fiber $F_b=f^{-1}(b)$, and assume that $\dim(\Sigma_b^\infty) =s$. Then
\be\label{bbound}
\dim  \wti{H}^{n-s-1}(F_b;\bQ) \leq \sum_{S \subset \Sigma_b^\infty \atop \dim S=s} \dim \ \ker \left(M_b - id:  (L_S)^{A_S} \to (L_S)^{A_S} \right),
\ee
where the sum on the right-hand side of the inequality is over $s$-dimensional Whitney strata contained in $\Sigma_b^\infty$, and $M_b$ is induced from the corresponding monodromy automorphism of ${^p\varphi}_{\of-b}Rj_*\bQ_X[n+1]$. In particular, 
\be\label{bbound2}
\dim  \wti{H}^{n-s-1}(F_b;\bQ) \leq \sum_{S \subset \Sigma_b^\infty \atop \dim S=s} \mu^\pitchfork_S,
\ee
with $\mu^\pitchfork_S:=\dim \wti{H}^{n-s}(MF^\pitchfork_{S};\bQ)$ the corresponding transversal Milnor number of $\of-b$ along the stratum $S \subset \Sigma_b^\infty$.
\et

\begin{proof}
In view of Proposition \ref{pr43} and Theorem \ref{ibinc}, and in the notations of the latter, it suffices to study the group
\[H^0(\Sigma_b^{\infty,s}; \cH^{-s}(\delta_s^! (u_b^\infty)^!i_b^! \cG)) \cong 
\bigoplus_{S \subset \Sigma_b^{\infty,s}} H^0(S; \cH^{-s}(\delta_s^! (u_b^\infty)^!i_b^! \cG)\vert_S).  \]
We have,
\[\delta_s^! (u_b^\infty)^!i_b^! \cG = \delta_s^! (\sigma_b^\infty)^!\cP_b=  (\sigma_b^\infty \circ \delta_s)^!\cP_b = (\sigma_b^\infty \circ \delta_s)^*\cP_b,\]
where $\cP_b$ is the perverse sheaf on $\Sigma_b$ defined as in \eqref{pp}, and we use the fact that \[\sigma_b^\infty \circ \delta_s:\Sigma_b^{\infty,s} \hookrightarrow \Sigma_b\] is the open inclusion in $\Sigma_b$ of the top $s$-dimensional strata ``at infinity''. Hence,
\[\cH^{-s}(\delta_s^! (u_b^\infty)^!i_b^! \cG)\vert_S \cong \cH^{-s}( (\sigma_b^\infty \circ \delta_s)^*\cP_b)\vert_S \cong  \cH^{-s}(\cP_b)\vert_S\]
is the above mentioned local system $\cL_S$ on $S$, with stalk $L_S\cong \wti{H}^{n-s}(MF^\pitchfork_{S};\bQ)^\vee$ and monodromy $A_S$. It follows that
\[H^0(S; \cH^{-s}(\delta_s^! (u_b^\infty)^!i_b^! \cG)\vert_S) \cong (L_S)^{A_S}.\]
 The desired inequality \eqref{bbound} is a consequence of \eqref{cons}, \eqref{inclb} and the above considerations.
\end{proof}

As a consequence of Theorems \ref{tat} and \ref{topbb}, we get the following result discussed in \cite[Thm.6.3.23]{Di}.

\bc\label{topbbc}
Assume that the complex polynomial $f\colon \bC^{n+1} \to \bC$ has only isolated singularities, including at infinity. Let $b \in B$ be a bifurcation value of $f$, with corresponding atypical fiber $F_b=f^{-1}(b)$. Then
\begin{center}
$\wti{H}^k(F_b;\bQ)=0$ \ \ for all \ $k < n-1$
\end{center}
and
\be\label{bboundc}
\dim  \wti{H}^{n-1}(F_b;\bQ) \leq \sum_{p \in \Sigma_b^\infty} \mu_p,
\ee
with $\mu_p:=\dim \wti{H}^{n}(MF_p;\bQ)$ the corresponding Milnor number of $\of-b$ at the singular point $p\in \Sigma_b^\infty$.
\ec

\begin{proof}
When $s=0$, the strata $S$ appearing in the sum of \eqref{bbound} are points $p \in \Sigma_b^\infty$, and the local system monodromies $A_S$ are all trivial. Moreover, $\dim \ker(M_b-id)\leq \dim L_p = \mu^\pitchfork_p=\mu_p$, hence \eqref{bbound} yields \eqref{bboundc}.
\end{proof}

%%%%%%%%%%%%%%%%%%

\section{Top Betti number of general fiber. Semi-continuity properties}\label{top}
Recall from \cite{T2} that a complex polynomial $f\colon\C^{n+1} \to \C$ is called {\it generic-at-infinity} (or, of $\mathcal{G}$-type) if, for any $c\in \C$, the compactified fiber $\oF_c \subset \C P^{n+1}$ is transversal to the hyperplane at infinity $H_\infty$. It can be shown that fibers of a $\cG$-type polynomial may have at most isolated singularities. %One important example is that of the Fermat polynomial $f(x_1, \ldots, x_{n+1})=\sum_{i=1}^{n+1} x_i^d.$

Degree $d$ polynomials of $\cG$-type are completely characterized by the fact that the top Betti number $b_n$ of their general fiber equals $(d-1)^{n+1}$, e.g., see  \cite[Prop.4.1.5]{T2}. The lower semi-continuity property for the top Betti number of general fibers (cf. \cite[Prop.2.1]{ST1}, \cite[Prop.4.2.1]{T2}) then yields the following upper bound result.

\begin{proposition}\label{pr61}
Let $f:\C^{n+1} \to \C$ be a degree $d$ complex polynomial with bifurcation set $B$. For any $c \in \C \setminus B$, one has
\be\label{tb}
b_n(F_c)\leq (d-1)^{n+1}.
\ee
\end{proposition}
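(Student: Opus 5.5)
The plan is to deduce \eqref{tb} from the lower semi-continuity of the top Betti number of general fibers, by deforming $f$ to a polynomial of $\cG$-type. Write $f=f_d+f_{<d}$, with $f_d$ the degree $d$ homogeneous part, and consider the one-parameter family
\[
P(x,\lambda)=f(x)+\lambda\,(x_1^d+\cdots+x_{n+1}^d), \qquad f_\lambda:=P(-,\lambda).
\]
Each $f_\lambda$ has degree $d$ for all but finitely many $\lambda$ (the top-degree part $f_d+\lambda\sum x_i^d$ vanishes for at most one value of $\lambda$). The key claim is that for generic $\lambda$ the polynomial $f_\lambda$ is of $\cG$-type. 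Indeed, $\oF_c\cap H_\infty$ is the projective hypersurface $\{f_d+\lambda\sum x_i^d=0\}\subset H_\infty\cong\C P^n$, independently of $c$. Being $\cG$-type for all $c$ amounts to asking that this hypersurface at infinity be smooth and that $\oF_c$ be smooth along it. If the degree $d$ form $f_d+\lambda\sum x_i^d$ defines a smooth hypersurface in $\C P^n$, then near $H_\infty$ the equation $f^h-cx_0^d$ has nonvanishing derivative in the $H_\infty$-directions. Hence $\oF_c$ is smooth there and meets $H_\infty$ transversally. Smoothness of $f_d+\lambda\sum x_i^d$ holds for all but finitely many $\lambda$: the pencil spanned by the Fermat form and $f_d$ contains a smooth member, and the discriminant restricted to this pencil is a nonzero polynomial in $\lambda$ (after homogenizing the pencil as $\mu f_d+\lambda \sum x_i^d$).

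Next I would invoke the characterization cited above (\cite[Prop.4.1.5]{T2}): for $\lambda\neq 0$ small and generic, the general fiber $F^\lambda$ of $f_\lambda$ satisfies $b_n(F^\lambda)=(d-1)^{n+1}$. One can also see this directly from Corollary \ref{geniso}. A $\cG$-type polynomial is tame with only isolated affine singularities. Its total Milnor number equals the number of solutions of $\partial f_\lambda=0$, which is $(d-1)^{n+1}$ by B\'ezout, since the leading forms $\partial_i(f_d+\lambda\sum x_i^d)$ have no common zero at infinity.

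Finally, I would apply the lower semi-continuity property (\cite[Prop.2.1]{ST1}, \cite[Prop.4.2.1]{T2}, or Theorem \ref{lsc} below) to the family $P$ at $\lambda=0$. It gives $b_n(F^0)\le b_n(F^\lambda)$ for $\lambda\neq0$ close to $0$, where $F^0=F_c$ is the general fiber of $f$, and this yields $b_n(F_c)\le(d-1)^{n+1}$. Since the topology of $F_c$ is independent of $c\notin B$, the inequality holds for every $c\in\C\setminus B$.

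The main obstacle is justifying the semi-continuity step in the precise form needed: the general fiber of $f_\lambda$ must be compared with that of $f_0$ even though the bifurcation sets $B_\lambda$ move with $\lambda$. If this is not available directly, I would choose $c$ outside $B_0$ and show that $c\notin B_\lambda$ for small generic $\lambda$. I would then realize $F^0_c$ as the special fiber of the family $\{P(x,\lambda)=c\}$ over the $\lambda$-disc. The inclusion $H^n_c(F^0_c)\hookrightarrow H^n_c(F^\lambda_c)$ would come from the nearby-cycle specialization map, with injectivity following from the vanishing of $H^{n-1}_c$-type contributions of the vanishing cycles, by Artin vanishing in the affine setting.
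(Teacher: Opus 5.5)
Your proposal is correct and follows the paper's route. You deform $f$ through degree $d$ polynomials to one of $\cG$-type, use $b_n=(d-1)^{n+1}$ for the latter, and conclude by the lower semicontinuity of Theorem~\ref{lsc}; that theorem's proof already handles the moving bifurcation sets as in your last paragraph (fix a general $c$ for all small parameters, then nearby cycles plus Artin vanishing), while your explicit deformation $f+\lambda\sum_i x_i^d$, the discriminant argument, and the B\'ezout check via Corollary~\ref{geniso} spell out what the paper leaves implicit.
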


In this section, we give a sheaf theoretic proof of the above-mentioned lower semicontinuity property for the top Betti number of general fibers, which has the advantage that it is also amenable to Hodge-theoretic considerations. (The homological version of this result is proved in \cite{ST1,T2} by homotopy-theoretic methods.)

\begin{theorem}\label{lsc}
%Let $f,g: \C^{n+1} \to \C$ be degree $d$ polynomials, with $P(x,s):=f(x)-sg(x):\C^{n+1} \times \C \to \C$ a one-parameter deformation of $f=P(-,0)$ by $g$. 
Let $P(x,s):\C^{n+1} \times \C \to \C$ be a one-parameter family of degree $d$ polynomials, %with $f=f_0=P(-,0)$.
and denote by $F^s$ the general fiber of $f_s:=P(-,s)$, $s \in \C$. Then there is an injective homomorphism 
\be\label{injl} H^n_c(F^0;\Q)  \rightarrowtail H^n_c(F^s;\Q) \ee
for $s\neq 0$ close enough to $0$. In particular, the top Betti number $b_n$ of general fibers satisfies the lower semi-continuity property, namely,
\begin{center} $b_n(F^s) \geq b_n(F^0)$, for $s \neq 0$ close enough to $0$.\end{center} 
\end{theorem}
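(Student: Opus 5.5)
The plan is to realize $F^0$ and the nearby $F^s$ as fibers of a single algebraic map onto a disc in the $s$-line. The injectivity \eqref{injl} should then follow from the perverse $t$-structure on a curve combined with Artin vanishing. The lower semicontinuity $b_n(F^s)\ge b_n(F^0)$ follows by Poincaré duality, since $F^0,F^s$ are smooth of complex dimension $n$ and so $b_n=\dim H^n_c$.

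\emph{Step 1 (choice of a good level $t_0$).} Consider the polynomial map $\Phi\colon \C^{n+1}\times\C\to\C^2$, $(x,s)\mapsto (P(x,s),s)$. By generic local triviality of algebraic maps, there is a proper Zariski closed subset $\Delta\subset\C^2$ such that $\Phi$ is a locally trivial fibration over $\C^2\setminus\Delta$. If $(t_0,s)\notin\Delta$, a trivialization of $\Phi$ over a product neighbourhood $U\times V$ commutes with the projection to $s$. Restricting it to $s$ fixed shows that $f_s$ is locally trivial near $t_0$, i.e.\ $t_0\notin B(f_s)$. I choose $t_0\in\C$ with the following properties: $t_0\notin B(f_0)$; the line $\{t=t_0\}$ is not contained in $\Delta$ (true for all but finitely many $t_0$); and $t_0$ is a regular value of $P$ (Sard). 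Then $(t_0,s)\notin\Delta$ for all $0<|s|\ll 1$. Hence $f_s^{-1}(t_0)$ is a general fiber $F^s$ for such $s$, and $f_0^{-1}(t_0)=F^0$.

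\emph{Step 2 (the family over a disc).} Let $Z:=\{P(x,s)=t_0\}\subset\C^{n+1}\times\C$. It is an affine hypersurface of dimension $n+1$, smooth by the choice of $t_0$, though only the hypersurface property is really needed. Let $\pi\colon Z\to\C$ be the projection to $s$. Then $\bQ_Z[n+1]$ is perverse. Since $Z$ is affine and $\pi$ is an affine morphism, Artin vanishing (in its relative form for $R\pi_!$) gives
\[
K:=R\pi_!\bQ_Z[n+1]\in {}^pD^{\ge 0}(\C),
\]
and $K$ is constructible. By proper base change for $R\pi_!$, the stalk $\cH^{k}(K)_s\cong H^{k+n+1}_c(\pi^{-1}(s);\bQ)$. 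In particular $\cH^{-1}(K)_0\cong H^n_c(F^0;\bQ)$ and $\cH^{-1}(K)_s\cong H^n_c(F^s;\bQ)$ for small $s\neq0$.

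\emph{Step 3 (perversity on a curve gives injectivity).} Shrink to a small disc $D$ around $0$ on which $K$ is constructible with respect to $\{0\}\sqcup D^*$. Then $L:=\cH^{-1}(K)|_{D^*}$ is a local system, and $\cH^{-1}(K)_0=H^0(D;\cH^{-1}(K))$. On a curve, the condition $K\in{}^pD^{\ge0}$ means $\cH^{k}(K)=0$ for $k<-1$ and $i_0^!K\in D^{\ge 0}$. Using the first vanishing, the second amounts to $\cH^0_{\{0\}}(\cH^{-1}(K))=0$, i.e.\ the sheaf $\cH^{-1}(K)$ has no nonzero sections supported at $0$. Hence restriction $H^0(D;\cH^{-1}(K))\to H^0(D^*;L)$ is injective. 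Composing with the injection $H^0(D^*;L)=L_s^{\pi_1}\hookrightarrow L_s$ yields
\[
H^n_c(F^0;\bQ)\rightarrowtail H^n_c(F^s;\bQ).
\]
Its image moreover lies in the monodromy invariants, which is the part that should interact well with Hodge theory.

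\emph{Expected obstacle.} I expect Step 1 to be the delicate step. One must make sure that the single level $t_0$ is simultaneously atypical for none of the $f_s$, $s$ small, including $s=0$. It is here that the fiber-preserving local triviality of $\Phi$ is essential: the argument must not merely assume that the bifurcation sets $B(f_s)$ vary continuously. A secondary point to check carefully is the correct direction of Artin vanishing for $R\pi_!$ (it lands in ${}^pD^{\ge0}$, dual to $R\pi_*$ landing in ${}^pD^{\le0}$), which is exactly what makes the lowest compactly supported degree $n$ behave well.
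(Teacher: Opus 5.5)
Your proof is correct. Its key input is the same as the paper's: relative Artin vanishing for the affine morphism $\pi\colon Z\to\C$ (the paper's $\pi\circ u\colon Y\to\C$), which gives $R\pi_!\Q_Z[n+1]\in{}^pD^{\geq 0}(\C)$. What differs is the packaging.

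The paper compactifies $Y$ to $\overline{Y}\subset\C P^{n+1}\times\C$. It then uses the triangle $\overline{i}_0^*u_!\Q_Y\to\psi_\pi u_!\Q_Y\to\varphi_\pi u_!\Q_Y$ on $\overline{F}^0$ to reduce injectivity to the vanishing of $\bH^{n-1}(\overline{F}^0;\varphi_\pi u_!\Q_Y)$. Proper base change for vanishing cycles rewrites this group as $H^{-1}({}^p\varphi_{\mathrm{id}}R(\pi\circ u)_!\Q_Y[n+1])$, which vanishes by $t$-exactness of ${}^p\varphi_{\mathrm{id}}$.

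You bypass both the compactification and the nearby/vanishing cycle formalism. You read off the stalks of $K$ by base change for $R\pi_!$. You then unpack $K\in{}^pD^{\geq 0}$ on a curve as ``$\cH^{-1}(K)$ has no sections supported at $0$''. Since $\cH^{-2}(K)$ vanishes on $D^*$, this condition is equivalent to the vanishing of $\cH^{-1}({}^p\varphi_{\mathrm{id}}K)_0$. So the two arguments establish the same fact.

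Your route has several advantages:
\begin{itemize}
\item It is more elementary.
\item It never uses the degree-$d$ hypothesis, which only matters for the bound $(d-1)^{n+1}$.
\item It records that the image lies in the monodromy invariants.
\item Your Step 1 justifies, via generic local triviality of $(x,s)\mapsto(P(x,s),s)$, that one level $c$ gives general fibers of $f_s$ for all small $s$. The paper only asserts this.
\end{itemize}

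The paper's formulation, in return, presents the map as $H^n_c(F^0)\to\bH^n(\overline{F}^0;\psi_\pi u_!\Q_Y)$, with target a nearby-cycle group. That is the natural form for the limit mixed Hodge structure considerations the authors intend.
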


\begin{proof}
For $c \in \C$ general enough, we may assume that $f_s^{-1}(c)=F^s$ is the general fiber of $f_s$, for $s$ in a small enough neighborhood of $0$. Consider the variety
\[
\overline{Y}:=\{\left([x_0:x],s\right) \in  \bC P^{n+1} \times \bC \mid P^h(x_0,x,s)-cx_0^d=0  \},
\]
where $P^h$ is the homogenization of $P$ by the variable $x_0$, considering $s$ as a parameter. Let $\pi:\overline{Y} \to \C$ be the projection to the $s$-coordinate. Note that $\oF^s:=\pi^{-1}(s)$ is the projective closure in $\C P^{n+1}$ of the affine hypersurface $F^s=f_s^{-1}(c)$.

Set \[ Y = \overline{Y}\cap \{x_0=1\}=\{(x,s) \in  \bC^{n+1} \times \bC \mid P(x,s)=c  \},\] with the open affine inclusion $u: Y\hookrightarrow \overline{Y}$, and note that $F^s=\oF^s \cap Y$.

Let $\overline{i}_0: \overline{F}^0 \hookrightarrow \overline{Y}$ be the inclusion map, and consider the distinguished triangle 
\[ \overline{i}_0^{*}u_{!}\Q_Y \lra \psi_{\pi}u_{!}\Q_Y \lra \varphi_{\pi}u_{!}\Q_Y \overset{[1]}{\lra}\]
and the following part of the associated hypercohomology long exact sequence:
\be\label{les10} \cdots \to \bH^{n-1}(\overline{F}^0; \varphi_{\pi}u_{!}\Q_Y) \to \bH^n(\overline{F}^0; \overline{i}_0^{*}u_{!}\Q_Y) \to \bH^{n}(\overline{F}^0; \psi_{\pi}u_{!}\Q_Y) \to \cdots \ee

Let $i_0:F^0 \hookrightarrow Y$ and $u_0:F^0 \hookrightarrow \overline{F}^0$ be the inclusion maps. 
Using proper base change, we have
\begin{equation}\label{les11}
\bH^n(\overline{F}^0; \overline{i}_0^{*}u_{!}\Q_Y) \cong 
   \mathbb{H}^n(\overline{F}^0; {u_0}_!{i}_0^{*}\mathbb{Q}_{Y}) \cong  \mathbb{H}^n(\overline{F}^0; {u_0}_!\mathbb{Q}_{F^0})  \cong H_c^n(F^0;\mathbb{Q}).
\end{equation}

Similarly, let $\overline{i}_{s}: \overline{F}^{s}\hookrightarrow \overline{Y}$, $u_{s}: F^{s} \hookrightarrow \overline{F}^{s}$ and $i_{s}: {F}^{s}\hookrightarrow Y$ be the inclusions. Then, using \cite[Ex.10.4.20]{MS} and proper base change, for $s\neq 0$ sufficiently close to $0$ we have
\begin{equation}\label{les12}
\bH^{n}(\overline{F}^0; \psi_{\pi}u_{!}\Q_Y) \cong 
 \mathbb{H}^n(\overline{F}^{s}; \overline{i}_{s}^{*}u_{!}\mathbb{Q}_{Y})
 \cong \mathbb{H}^n(\overline{F}^{s}; {u_{s}}_!i_{s}^{*}\mathbb{Q}_{Y}) \cong H^n_c(F^{s};\mathbb{Q}).
\end{equation}

Therefore, by combining \eqref{les10}, \eqref{les11} and \eqref{les12}, we note that in order to get an injection $H^n_c(F^0;\Q)  \rightarrowtail H^n_c(F^s;\Q)$ as in the statement of the theorem, it suffices to show that  $\bH^{n-1}(\overline{F}^0; \varphi_{\pi}u_{!}\Q_Y) \cong 0$.

Note that
\begin{equation}\label{lab1}
\bH^{n-1}(\overline{F}^0; \varphi_{\pi}u_{!}\Q_Y) \cong 
\mathbb{H}^{-1}(\overline{F}^0;{}^p\varphi_{\pi}u_{!}\mathbb{Q}_{Y}[n+1])
\cong H^{-1}(R{\pi_0}_*{}^p\varphi_{\pi}u_{!}\mathbb{Q}_{Y}[n+1]),
\end{equation}
where ${}^p\varphi_{\pi}=\varphi_{\pi}[-1]$ is the perverse vanishing cycle functor,  and $\pi_0\colon \overline{F}^0\rightarrow \{0\}$ is the constant map.
 Now, consider the maps $\overline{Y}\xrightarrow{\pi}\mathbb{C}\xrightarrow{\textrm{id}}\mathbb{C},$ with induced maps $\overline{F}^0\xrightarrow{\pi_0} \{0\}\xrightarrow{\textrm{id}} \{0\}$ of zero sets. Since $\pi$ is proper, by proper base change for vanishing cycles (e.g.,  see \cite[Rem.4.3.7]{Sch}) we have
\begin{equation}
    \varphi_{\textrm{id}}\circ R\pi_{*}\simeq R{\pi_{0}}_*\circ \varphi_{\pi}
\end{equation}
and so
\begin{equation}\label{lab2}
H^{-1}(R{\pi_0}_*{}^p\varphi_{\pi}u_{!}\mathbb{Q}_{Y}[n+1])\cong H^{-1}({}^p\varphi_{\textrm{id}} R\pi_{*}u_{!}\mathbb{Q}_{Y}[n+1]) \cong H^{-1}({}^p\varphi_{\textrm{id}}R(\pi \circ u)_{!}\mathbb{Q}_Y[n+1]),
\end{equation}
where the last isomorphism uses the properness of $\pi$. 

Since $Y$ is a complex hypersurface of dimension $n+1,$ the complex $\mathbb{Q}_Y[n+1]$ is perverse on $Y$. Since $\pi \circ u : Y \to \C$ 
is an affine map, the functor $R(\pi \circ u)_{!}$ is left $t$-exact, and so $R(\pi \circ u)_{!}\mathbb{Q}_Y[n+1]\in {}^p D^{\geq 0}(\mathbb{C}).$ Then since ${}^p\varphi_{\textrm{id}}$ is $t$-exact, we get that ${}^p\varphi_{\textrm{id}} R(\pi \circ u)_{!}\mathbb{Q}_Y[n+1]\in {}^pD^{\geq 0}(\{0\})=D^{\geq 0}(\{0\}),$ which yields that 
\begin{equation}\label{lab3}
    H^{-1}({}^p\varphi_{\textrm{id}}R(\pi \circ u)_{!}\mathbb{Q}_Y[n+1])=0.
\end{equation}
Then by \eqref{lab1},  \eqref{lab2} and \eqref{lab3} we get that
\begin{equation}
    \bH^{n-1}(\overline{F}^0; \varphi_{\pi}u_{!}\Q_Y) \cong 0,
\end{equation}
as desired.

Finally, since both $F_0$ and $F_{s}$ are smooth of complex dimension $n$, we get from \eqref{injl} by Poincar\'e duality that \[b_n(F^0) \leq b_n(F^{s}).\]
\end{proof}

\begin{proof}[Proof of Proposition \ref{pr61}]
This is a direct consequence of the above theorem, after choosing a one-parameter degree $d$ polynomial deformation $P(x,s)$ of $f=P(-,0)$, so that $P(x,s)$ is generic-at-infinity for some $s\neq 0$ close enough to $0$.
\end{proof}

\section{Examples}\label{ex}
Examples are generally difficult to handle. While the case of isolated singularities, including those at infinity, is fairly well understood, in this section we present several examples of complex polynomials with non-isolated singularities and apply the bounds from Corollary~\ref{nmsb} and Theorem~\ref{topbb} to these cases.

\bex\label{ex61}
Consider the following extension of Broughton's example:
\[ f: \C^{n+1} \to \C, \ f(x_1, \ldots, x_{n+1})= x_1^2x_2+x_1 \ \ (n \geq 2).\]
Then a simple calculation shows that $f$ has no affine singularities, however, $c=0$ is an atypical value of $f$. In fact, if $c\neq 0$, $F_c=f^{-1}(c)$ is isomorphic to $\C^* \times \C^{n-1}$, while $F_0=f^{-1}(0)$ consists of two disjoint irreducible components, $\C^n$ and $\C^*\times \C^{n-1}$. Therefore, 
\[\wti{H}^k(F;\Q) \cong \begin{cases}
\Q & k=1 \\
0 & k \neq 1,
\end{cases}\]
\[\wti{H}^k(F_0;\Q) \cong \begin{cases}
\Q & k=0,1 \\
0 & {\rm otherwise}.
\end{cases}\]

\noindent One also gets by direct calculation that
\[
 \Sigma_0=\Sigma_0^\infty
 =\{[x_0:x_1:x_2:\cdots:x_{n+1}]\mid x_0=x_1=0\}
 \cong{\bC P}^{n-1},
\]
so $s=s_0=n-1$. A compatible Whitney stratification can be chosen with
unique top-dimensional stratum
\[
 S_{n-1}:=\Sigma_0^\infty\cap\{x_2\ne0\}
 \cong\mathbb C^{n-1},
\]
and lower-dimensional strata contained in
$\Sigma_0^\infty\cap\{x_2=0\}\cong{\bC P}^{n-2}$.

To compute the transversal Milnor(-L\^e) number $\mu^\pitchfork$ along $S_{n-1}$, note that
\[
 \oX=\{x_1^2x_2+x_1x_0^2-tx_0^3=0\}
 \subset\mathbb{CP}^{n+1}\times\mathbb C.
\]
At a point $p\in S_{n-1}$, use the chart $x_2=1$ and fix the remaining
coordinates $x_3,\ldots,x_{n+1}$. The transverse graph and its projection
are
\[
 \overline Y=\{h_t(x_0,x_1)=x_1^2+x_1x_0^2-tx_0^3=0\}
\]
and $\overline g=t$. 
Let $\overline G_t=\bar g^{-1}(t)$. At $t=0$, completing the square gives
\[
 h_0=\left(x_1+\frac{x_0^2}{2}\right)^2-\frac{x_0^4}{4},
\]
so $(\overline G_0,p)$ is a plane curve singularity of type $A_3$ and
$\mu(\overline G_0,p)=3$. For $t=c\ne0$ sufficiently small,
\[
 h_c=\left(x_1+\frac{x_0^2}{2}\right)^2
       -x_0^3\left(c+\frac{x_0}{4}\right).
\]
Since the factor $c+\frac{x_0}{4}$ is a unit at the origin, $(\overline G_c,p)$ is an $A_2$ singularity, with $\mu(\overline G_c,p)=2$. There are no other singular points of the nearby transverse curve in a sufficiently small Milnor neighborhood. Therefore, the Milnor-number jump formula at infinity \eqref{MilLe}, discussed in Remark \ref{calcmu}, yields
\[
 \mu^\pitchfork
 =\dim\widetilde H^1(MF^\pitchfork;\mathbb Q)
 =\mu(\overline G_0,p)-\mu(\overline G_c,p)=3-2=1.
\]
Here $MF^\pitchfork$ is the transversal Milnor fiber of $\of$, equivalently the Milnor fiber of the graph projection $\overline g$ at $p$. Thus Corollary~\ref{nmsb} and Theorem~\ref{topbb} give, respectively,
\[
 b_1(F)\le1,\qquad \widetilde b_0(F_0)\le1.
\]
Both bounds are sharp, since the concrete descriptions of the fibers give $b_1(F)=\widetilde b_0(F_0)=1$.
\eex

\bex
Let $f:\bC^4\to \bC$ be given by $f=x_1^2+x_1(x_2^2+x_3^2+x_4^2)$. This is a weighted homogeneous polynomial, so $B=\{0\}$ and $F_0=f^{-1}(0)$ is contractible. The first non-vanishing cohomology of the general (i.e., Milnor) fiber $F$ of $f$ can already be computed as in \cite{MPT2}, i.e., all top-dimensional singular strata contributing to this cohomology arise from the affine part (cf. also Remark \ref{whp3}). In more detail, the singular locus of $f$ consists of 
$\Sigma=V(x_1,x_2^2+x_3^2+x_4^2),$ with Whitney strata $S_0=\{(0,0,0,0)\}$ and $S_2=\Sigma \setminus S_0$. 
The transversal Milnor fiber $MF^\pitchfork$ for the stratum $S_2$ is the Milnor fibre of an $A_{1}$-singularity, so $\mu^\pitchfork=1$. Moreover,  $S_2$ is homotopy equivalent to the link of a quotient surface singularity of type $A_1$, thus $\pi_1(S_2)=\bZ/2$. 
Therefore, using \eqref{lastf}, we get that
$$H^1(F;\Q) \hookrightarrow  H^1(MF^\pitchfork;\Q)^{\bZ/2} \subset \Q,$$
whence $b_1(F)\leq 1$.
Note that after a change of coordinates $F$ can be identified with the Milnor fiber of the singularity at the origin of the polynomial  $x_1^2+(x_2^2+x_3^2+x_4^2)^2$, and its homotopy type can be deduced via the Thom-Sebastiani theorem as the suspension on two disjoint $S^2$'s. That is, $F$ has the homotopy type of $S^1\vee S^3 \vee S^3$. This implies that in fact $b_1(F)=1$, and hence $\pi_1(S_2)$ acts trivially on $H^1(MF^\pitchfork;\Q)$. 
\eex

\bex
Let $f:\bC^3\to \bC$ be given by $f=x_1+x_1^2x_2x_3$. Then $\Sing(f)=\emptyset$, so all fibers of $f$ are smooth. 

It is easy to see that the zero-fiber of $f$, that is, $F_0=\{x_1+x_1^2x_2x_3=0\}$ is a disjoint union  of a copy of $\mathbb{C}^2$ (where $x_1=0$) with a $2$-dimensional complex torus $(\C^*)^2$. Hence its Betti numbers are $b_0(F_0) = 2$, $b_1(F_0) = 2$, $b_2(F_0) = 1$, and $b_i(F_0) = 0$, for all $i \geq 3$.

Let us now consider the fiber $F_1=\{x_1+x_1^2x_2x_3=1\}\subset \mathbb{C}^3.$ Consider the map $u: \mathbb{C}^3\rightarrow\mathbb{C}^3$ given by $(x_1,x_2,x_3)\mapsto (x_1,y:=x_1^2x_2,x_3).$ Note that if $(x_1,x_2,x_3)\in F_1,$ then $u(x_1,x_2,x_3)$ lies on the hypersurface $Y_1=\{x_1+yx_3=1\}\subset \mathbb{C}^3.$ Let $g: F_1\rightarrow Y_1$ denote the restriction of $u$ to $F_1.$ Since $x_1\neq 0$ in $F_1,$ the image of $g$ in $Y_1$ also satisfies $x_1\neq 0.$ The  map $h: Y_1 \setminus \{x_1=0\}\rightarrow F_1$ given by $(x_1,y,x_3)\mapsto (x_1, x_2=\frac{y}{x_1^2}, x_3)$ shows that $g$ is a homeomorphism $F_1\cong Y_1\setminus \{x_1=0\}.$

We will next show that one has a homeomorphism $Y_1\setminus \{x_1=0\}\cong \mathbb{C}^2\setminus \{yx_3=1\}.$ Take the maps $p: \mathbb{C}^2\setminus \{yx_3=1\}\rightarrow Y_1\setminus \{x_1=0\}$ given by $(y,x_3)\mapsto (1-yx_3, y, x_3),$ and $q: Y_1\setminus \{x_1=0\}\rightarrow\mathbb{C}^2 \setminus \{yx_3=1\}$ given by $(x_1,y,x_3)\mapsto (y,x_3).$ These two maps are inverses, proving the assertion.

So to calculate the Betti numbers of $F_1$,  it suffices to calculate the Betti numbers of $\mathbb{C}^2\setminus \{yx_3=1\}.$ From the long exact sequence of the pair $(\mathbb{C}^2,\mathbb{C}^2\setminus \{yx_3=1\}),$ it follows that for $k\geq 1,$ 
\[H^k(\mathbb{C}^2\setminus \{yx_3=1\};\bQ)\cong H^{k+1}(\mathbb{C}^2, \mathbb{C}^2\setminus \{yx_3=1\};\bQ).\]
By Alexander duality and Poincar\'e duality (noting that the curve $yx_3=1$ is smooth in $\bC^2$), we get 
\[H^{k+1}(\mathbb{C}^2, \mathbb{C}^2\setminus \{yx_3=1\};\bQ)\cong H_c^{3-k}(\{yx_3=1\};\bQ)^{\vee}\cong H_{k-1}(\{yx_3=1\};\bQ)^\vee.\]
Hence for  $k\geq 1$ we have 
\[H^k(F_1;\bQ)\cong H^k(\mathbb{C}^2\setminus \{yx_3=1\};\bQ) \cong H_{k-1}(\{yx_3=1\};\bQ)^\vee \cong H_{k-1}(\mathbb{C}^{*};\bQ)^\vee,\]
which implies \[H^2(F_1;\bQ) \cong \bQ, \ H^1(F_1;\bQ) \cong \bQ.\]
We also have $H^0(F_1;\bQ) \cong \bQ$ since $\mathbb{C}^2\setminus \{yx_3=1\}$ is connected. 
Therefore, the Betti numbers of $F_1$ are $b_k(F_1)=1$ for $k\in \{0,1,2\}$ and  they vanish otherwise.

On the other hand, for any $c\neq 0,$ the map $v_c: \mathbb{C}^3\rightarrow\mathbb{C}^3$, $(x_1,x_2,x_3)\mapsto (c^{-1}x_1, cx_2, x_3)$ is a homeomorphism, and the image of the fiber $F_c=\{x_1^2x_2x_3+x_1=c\}$ is $F_1$. Indeed, with $x=(x_1,x_2, x_3)$, we have $f(v_c(x))=f(x)/c$, so if $x \in F_c$, then $v_c(x) \in F_1$. The inverse of $v_c$ is $(x_1, x_2, x_3) \mapsto (cx_1, c^{-1}x_2, x_3)$ and this maps $F_1$ to $F_c$.
So the general fiber of $f$ has the topological type of $F_1,$ and the other possibility for the topological type of a fiber of $f$ is $F_0=\{x_1^2x_2x_3+x_1=0\}.$ The zero-fiber has already been shown to be disconnected, while $F_1$ is connected.
So $0$ is the only bifurcation value of $f$.

The compactification $\overline{F}_0=\{x_1^2x_2x_3+x_1x_0^3=0\}\subset \mathbb{C}P^3$ has two irreducible components, the hyperplane $H=\{x_1=0\}$ and the cubic hypersurface $C=\{x_0^3+x_1x_2x_3=0\}$. Their intersection is
\[ L=H \cap C=\{x_0=0, x_1=0\}=\{[0:0:x_2:x_3]\}\cong \C P^1,\]
a projective line contained in the hyperplane at infinity $\{x_0=0\}$. The component $C$ has $3$ singular points,
\[P_1=[0:1:0:0], \ P_2=[0:0:1:0], \ P_3=[0:0:0:1], \]
and we note that $P_1\in C \setminus H$, while $P_2, P_3 \in L$. 
So the singular locus of $\oF_0$ consists of $L\sqcup P_1$.  
A natural Whitney stratification of $\oF_0$ has only one $1$-dimensional stratum
\[S_1=L\setminus \{P_2, P_3\}=\{[0:0:x_2,x_3] \mid x_2 \neq 0, x_3\neq 0\},\]
which is contained entirely in the hyperplane at infinity. 

To find the transversal Milnor number $\mu^\pitchfork$ along $S_1$, consider the graph compactification
\[
 \oX=\{x_1^2x_2x_3+x_1x_0^3-tx_0^4=0\}
 \subset\bC P^3\times\mathbb C.
\]
At a point $p\in S_1$, work in the chart $x_2=1$ and take the transverse section $x_3=a$, where $a\ne0$. In the notation of Lemma~\ref{identif3}, the transverse graph is
\[ \overline Y=\{h_t(x_0,x_1)=a x_1^2+x_1x_0^3-tx_0^4=0\},\]
with projection $\overline g=t$. Denote by $\overline G_t=\overline g^{-1}(t)$ the fiber of $\overline g$ over $t \in \bC$. Then 
\[
 \overline G_0=\{x_1(x_0^3+a x_1)=0\}.
\]
We can write
\[
 h_0=a\left(x_1+\frac{x_0^3}{2a}\right)^2
           -\frac{x_0^6}{4a},
\]
so $(\overline G_0,p)$ is an $A_5$ singularity with $\mu(\overline G_0,p)=5$. For $t=c\ne0$ sufficiently small,
\[
 h_c=a\left(x_1+\frac{x_0^3}{2a}\right)^2
           -x_0^4\left(c+\frac{x_0^2}{4a}\right).
\]
The factor $c+\frac{x_0^2}{4a}$ is a unit at the origin. Hence the nearby transverse curve has an $A_3$ singularity at $p$, with $\mu(\overline G_c,p)=3$, and no other singular points in a sufficiently small Milnor neighborhood.
Therefore, the Milnor-number jump formula \eqref{MilLe}, discussed in Remark \ref{calcmu}, yields
\[
 \mu^\pitchfork
 =\dim\widetilde H^1(MF^\pitchfork;\mathbb Q)
 =\mu(\overline G_0,p)-\mu(\overline G_c,p)=5-3=2.
\]
Here $MF^\pitchfork$ is the transversal Milnor fiber of $\of$ along $S_1$. Since $S_1$ is the unique connected one-dimensional stratum contributing to these bounds, Corollary~\ref{nmsb} gives $b_1(F)\le2$, while Theorem~~\ref{topbb}  gives $\widetilde b_0(F_0)\le2$. The direct computations above show that $b_1(F)=\widetilde b_0(F_0)=1$, so these bounds are not equalities.
\eex

\end{document}